%
\documentclass[a4paper,reqno]{amsart}
\usepackage{amsmath, amsthm, amsfonts, amssymb, mathrsfs}
\usepackage{enumitem}
\usepackage{graphicx}
\usepackage[update,prepend]{epstopdf} 
\usepackage{color}

\newcommand{\N}{\mathbb{N}}
\newcommand{\R}{{\mathbb{R}}}
\newcommand{\C}{{\mathbb{C}}}

\newcommand{\ii}{{\rm i}}

\renewcommand{\H}{{\mathcal{H}}}
\newcommand{\X}{{\mathcal{X}}}
\newcommand{\G}{{\mathcal{G}}}
\newcommand{\Dom}{{\operatorname{Dom}}}

\renewcommand{\span}{{\operatorname{span}}}

\newcommand{\diag}{{\operatorname{diag}}}

\newcommand{\cf}{\emph{cf.}}
\newcommand{\ie}{{\emph{i.e.}}}
\newcommand{\eg}{{\emph{e.g.}}}
\newcommand{\dist}{\mathrm{dist}}

\newcommand{\s}{\stackrel{s}{\rightarrow}}
\newcommand{\tolong}{\longrightarrow}
\newcommand{\gnrlong}{\stackrel{gnr}{\longrightarrow}}

\newcommand{\lm}{\lambda}

\begin{document}

\graphicspath{{C:/Data/00Synchronized/Figures/2014/psp_conv/}}

\theoremstyle{plain}
\newtheorem{theorem}{Theorem}[section]
\newtheorem{lemma}[theorem]{Lemma}
\newtheorem{criterion}[theorem]{Criterion}
\newtheorem{proposition}[theorem]{Proposition}
\newtheorem{corollary}[theorem]{Corollary}
\renewcommand{\proofname}{Proof}
\theoremstyle{definition} 
\newtheorem{define}[theorem]{Definition}
\newtheorem{example}[theorem]{Example}
\newtheorem{remark}[theorem]{Remark}
\newtheorem{ass}[theorem]{Assumption}

\title{Remarks on the convergence of pseudospectra}

\author{Sabine B\"ogli}
\address[Sabine B\"ogli]{
Mathematisches Institut, 
Universit\"{a}t Bern,
Sidlerstrasse 5,
3012 Bern, Switzerland}
\email{sabine.boegli@math.unibe.ch}
\author{Petr Siegl}
\address[Petr Siegl]{Mathematisches Institut, Universit\"at Bern, Sidlerstrasse 5, 3012 Bern, Switzerland \& On leave from Nuclear Physics Institute ASCR, 25068 \v Re\v z, Czech Republic}
\email{petr.siegl@math.unibe.ch}

\subjclass[2010]{47A10, 47A58}

\keywords{pseudospectrum, generalised norm resolvent convergence, resolvent level sets}

\date{
\today
}

\begin{abstract}
We establish the convergence of pseudospectra in Hausdorff distance for closed operators acting in different Hilbert spaces and converging in the generalised norm resolvent sense. 
As an assumption, we exclude the case that the limiting operator has constant resolvent norm on an open set. 
We extend the class of operators for which it is known that the latter cannot happen by showing that if the resolvent norm is constant on an open set, then this constant is the global minimum. 
We present a number of examples exhibiting various resolvent norm behaviours 
and illustrating the applicability of this characterisation compared to known results.

\end{abstract}

\thanks{
The first author gratefully acknowledges the support of the Swiss National Science Foundation (SNF), grant no.\ 200020\_146477. 
The second author was supported by the Swiss Scientific Exchange Programme SCIEX, project no.\ 11.263.
}

\maketitle

\section{Introduction}
For $\varepsilon>0$ the $\varepsilon$-pseudospectrum of a closed operator $T$ acting in a Banach space~$\X$ is defined as the set
\begin{equation}\label{eq.def.of.sigma.eps}
\sigma_\varepsilon(T) := \left\{
z \in \C \, : \, \|(T-z)^{-1}\| > \frac{1}{\varepsilon}
\right\},
\end{equation}
where we employ the convention that $\|(T-z)^{-1}\|=\infty$ for $z$ in the spectrum of~$T$. 
While the $\varepsilon$-pseudospectrum of a normal operator in a Hilbert space coincides with the $\varepsilon$-neighbourhood of the spectrum, the situation is more involved in the non-normal case or for operators acting in Banach spaces, \cf~\cite{Trefethen-2005, Davies-2007, Helffer-2013-book}. 
In this paper we address the convergence of pseudospectra for sequences of unbounded operators and the related problem when the resolvent norm may be constant on an open set.

It is well known that spectra do not necessarily behave well under limiting procedures, even for bounded operators converging  in operator norm, \cf~\cite[Ex.~IV.3.8]{Kato-1966}.
Stability problems are simpler when passing from spectra to pseudospectra.
Consider a sequence $\{T_k\}_k$ of operators that converges (in some sense) to an operator~$T$. One might study convergence of pseudospectra, \ie
\begin{equation}\label{eq.conv.of.pseudospec}
\lim_{k\to\infty}\sigma_{\varepsilon}(T_k)=\sigma_{\varepsilon}(T)
 \end{equation}
(where the limit is defined appropriately) or the analogous identity for the closed sets.
Such results were established for truncated Wiener-Hopf and Toeplitz operators, \cf~\cite{Boettcher-1994}, and
for constant-coefficient differential operators, \cf~\cite{Reddy-1993-5,Davies-2000-43}; many results can also be found in the books~\cite{Trefethen-2005,Boettcher-Silbermann}.
In~\cite[Prop.~4.2]{Boettcher-Wolf-1997} necessary conditions for the inclusion ``$\supseteq$'' in~\eqref{eq.conv.of.pseudospec} are given for general approximations of a bounded operator.
The identity~\eqref{eq.conv.of.pseudospec} is known to hold for a sequence of bounded operators that converges in operator norm, \cf~\cite{Harrabi-1998}.
The convergence of pseudospectra in Hausdorff distance was proved in~\cite[Thm.~5.3]{Hansen-2011-24}
for (possibly unbounded) operators acting in the same Hilbert space and converging in the gap topology. 
This convergence result extends to the generalised $\varepsilon$-pseudospectrum, the so-called $(n,\varepsilon)$-pseudospectrum introduced in \cite{Hansen-2008-254}, \cf~ \eqref{eq.def.of.sigma.n.eps} below.
All the above pseudospectral convergence results rely on the condition that the limiting operator does not have constant resolvent norm on an open set 
(as probably first noted in~\cite{Boettcher-1994}); for classes of operators which do not a priori satisfy this condition, it needs to by guaranteed by assumption.

In view of applications in PDEs, \eg~the domain truncation method
where the operators act in different Hilbert spaces,
\cf~\cite{Brown-2004-24,Boegli-2014} and the references therein, 
we employ the so-called generalised norm resolvent convergence, \cf~Section \ref{sec:conv}.
Our first main result is the pseudospectral convergence for a sequence of operators that converges in the generalised norm resolvent sense, 
\cf~Theorem~\ref{thm.psp.conv} for the $\varepsilon$-pseudospectra and Theorem~\ref{thm.psp.conv.n} for its generalisation to $(n,\varepsilon)$-pseudospectra. 
Note that if all operators act in the same space, generalised norm resolvent convergence coincides with usual norm resolvent convergence.
If the resolvent set is non-empty, the latter convergence is equivalent to convergence in the gap topology, \cf~\cite[Thm.~IV.2.23]{Kato-1966}. Hence our results generalise ~\cite[Thm.~5.3]{Hansen-2011-24}.
Moreover, we show that if the operators all act in the same Banach space and converge in the norm resolvent sense, our pseudospectral convergence result remains valid, \cf~Remark~\ref{rem.Banach}.

Besides generalised norm resolvent convergence, two additional assumptions are needed in Theorem~\ref{thm.psp.conv} (and analogously in Theorem~\ref{thm.psp.conv.n}).
The first assumption guarantees a suitable selection of a compact set $K \subset \C$ in which the pseudospectral convergence is shown.
This assumption cannot be weakened to the one in~\cite[Thm.~5.3]{Hansen-2011-24}, see the counterexample in~Example~\ref{ex.counter.K}.
The second assumption excludes the possibility of constant resolvent norm on an open set. The necessity of this assumption is discussed in Example~\ref{ex.counter.const}.

Whether the resolvent norm of a bounded operator in a Banach space can be constant on an open set was first studied by Globevnik, \cf~\cite{Globevnik-1976-20}, see also \cite{Shargorodsky-2009-93} for a history of the problem. 
He showed that this cannot happen in the unbounded component of the resolvent set. 
Since then, the occurrence of constant resolvent norm on an open set has been excluded for various classes of Banach spaces~$\X$ and closed operators $T$ acting in $\X$, namely 
if $\X$ is complex uniformly convex and $T$ is bounded, \cf~\cite{Globevnik-1976-20,Boettcher-1997-3,Shargorodsky-2008-40}, or generates a $C_0$ semigroup, \cf~\cite{Shargorodsky-2010-42}, or
if $\X$ is complex strictly convex and $T$ is densely defined with compact resolvent, 
\cf~\cite{Davies-Shargorodsky-2014}. 
Moreover, by duality, it suffices that the condition on the Banach space is satisfied for $\X^*$. 
The definitions of complex uniform and complex strict convexity can be found in~\cite{Globevnik-1975-47,Shargorodsky-2009}, \cf~also Definition \ref{def.cuc} below. 
In particular, Hilbert spaces and $L^p$ spaces with $1 \leq p <\infty$ are complex uniformly convex (and thus complex strictly convex),  \cf~\cite{Clarkson-1936-40,Globevnik-1975-47}.

On the other hand, several examples are known in which the resolvent norm is constant on an open set,
\cf~\cite{Shargorodsky-2008-40,Shargorodsky-2009-93};
both contain examples of bounded operators in Banach spaces (that are not complex uniformly convex), the former also includes the construction of an unbounded operator in a Hilbert space.

As the second main result, we prove that if a closed operator $T$ acts in a complex uniformly convex Banach space and its resolvent norm is constant on an open set, then this constant is the global minimum, \cf~Theorem~\ref{thm.const.norm}; 
in Theorem~\ref{thm.const.norm.n} the result is generalised for higher powers of the resolvent. 
As a consequence, a resolvent norm decay (for $\lm\in\rho(T)$ tending to infinity along some path) is a sufficient condition for excluding constant resolvent norm on an open set. 
This applies in particular if $T$ is bounded or generates a $C_0$ semigroup,
\cf~Corollary~\ref{cor.bdd.or.semigroup}.
Nonetheless, Theorem~\ref{thm.const.norm} enables one to go beyond these two classes as shown in Example \ref{ex.not.semi}. 
The latter belongs to a class of examples in Hilbert spaces, \cf~the end of Section~\ref{sec:const}, that illustrates various resolvent norm behaviours and also naturally includes Shargorodsky's example \cite[Thm.~3.2]{Shargorodsky-2008-40} of an operator whose resolvent norm is constant on an open set.

The possible occurrence of constant resolvent norm on an open set is also relevant in the discussion about the definition of pseudospectra, \cf~\cite{Chatelin-Harrabi-98, Shargorodsky-2009}. 
Depending on the literature, for a closed operator $T$ and $\varepsilon>0$, both sets $\sigma_{\varepsilon}(T)$ and $\Sigma_{\varepsilon}(T)$ are called $\varepsilon$-pseudospectrum, where 
$\sigma_{\varepsilon}(T)$ is defined in~\eqref{eq.def.of.sigma.eps} and $\Sigma_{\varepsilon}(T)$ is the same with strict inequality replaced by non-strict inequality. 
This makes the set $\sigma_{\varepsilon}(T)$ an open and $\Sigma_{\varepsilon}(T)$ a closed subset of $\C$.
The closure of $\sigma_{\varepsilon}(T)$ is always contained in $\Sigma_{\varepsilon}(T)$, but equality holds if and only if $T$ does not have constant resolvent norm on any open set.

Throughout this paper, we use the following notation:
By $\|\cdot\|$ we denote the norm of all considered Banach spaces (it should be clear from the context what space is considered).
For an operator $T$ with domain $\Dom(T)$, the spectrum, approximate point spectrum and resolvent set are denoted by $\sigma(T)$, $\sigma_{\rm app}(T)$ and $\rho(T)$, respectively.
For a sequence $\{T_k\}_k$ of bounded operators acting in the same Banach space and converging strongly or in norm to an operator $T$, we write 
$T_k\stackrel{s}{\to} T$ and $T_k\to T$, respectively.
Finally, the open ball with radius $r>0$ around $\lm\in\C$ is denoted by~$B_r(\lm)$.

\section{Convergence of pseudospectra}
\label{sec:conv}

Our result on pseudospectral convergence is formulated for closed operators $T$ and $T_k$ acting in (possibly different) Hilbert spaces $\H$ and $\H_k$. A suitable convergence of operators in such a situation can be introduced following \eg~\cite{Weidmann-1997-34}. 
Assume that $\H$ and $\H_k$ are subspaces of one ``large'' Hilbert space $\G$ and $P:=P_{\H}$, $P_k:=P_{\H_k}$ are the orthogonal projections onto the respective subspaces. 
Then $T_k$ is said to converge to $T$ in the generalised
norm resolvent sense if 
\begin{equation}
\exists \lm_0 \in \bigcap_{k\in\N} \rho(T_k) \cap\rho(T) : \quad (T_k-\lm_0)^{-1}P_k \tolong (T-\lm_0)^{-1}P
\end{equation}
and we write  
$T_k \gnrlong T$.

Analogously to \cite{Hansen-2008-254,Hansen-2011-24}, we express the convergence of the $\varepsilon$-pseudospectra in the sense of Hausdorff distance which is defined for two non-empty compact sets $K_1, K_2 \subset \C$ as
\begin{equation}
d_{\rm H}(K_1,K_2):= \max 
\left\{
\sup_{z \in K_1} \dist(z,K_2), \sup_{z \in K_2} \dist(z,K_1) 
\right\},
\end{equation}
where, for $z \in \C$, $\dist(z,K_j):=\inf_{w \in K_j}|z-w|$.

The following result is proved in Section~\ref{sec:proof.1}. 
\begin{theorem}\label{thm.psp.conv}
Let $T, T_k$ be closed, densely defined operators acting in Hilbert spaces $\H,$ $\H_k$, respectively, with non-empty intersection of the resolvent sets. Let $K \subset \C$ be compact and $\varepsilon >0$. If 
\begin{enumerate}[label=\rm{\roman{*})}]
\item $\overline{\sigma_{\varepsilon}(T)} \cap K = \overline{\sigma_{\varepsilon}(T) \cap K} \neq \emptyset$,
\label{ass.K}
\item $\lambda\mapsto\|(T-\lambda)^{-1}\|$ is non-constant on any open subset of $\rho(T)$,
\label{ass.const}
\item $T_k \gnrlong T$,
\label{ass.1.3}
\end{enumerate}
then
\begin{equation}\label{dH.conv}
d_{\rm H} 
\left(
\overline{\sigma_{\varepsilon}(T_k)} \cap K, \overline{\sigma_{\varepsilon}(T)} \cap K
\right) \tolong 0,  \quad k \to \infty.
\end{equation}
\end{theorem}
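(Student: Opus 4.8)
The plan is to split the Hausdorff convergence~\eqref{dH.conv} into the two one-sided inclusions and to reduce everything to the scalar functions $g(\lambda):=\|(T-\lambda)^{-1}\|$ and $g_k(\lambda):=\|(T_k-\lambda)^{-1}\|$ (with value $+\infty$ on the respective spectra), so that $\sigma_\varepsilon(T)=\{g>1/\varepsilon\}$ and likewise for $T_k$. Writing $R(\lambda):=(T-\lambda)^{-1}P$ and $R_k(\lambda):=(T_k-\lambda)^{-1}P_k$ as operators on $\G$, one has $g=\|R(\cdot)\|$ and $g_k=\|R_k(\cdot)\|$, and $g$ is continuous and subharmonic on $\rho(T)$ because $\lambda\mapsto R(\lambda)$ is analytic there. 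The analytic core I would establish first is the following convergence: for \emph{every} $\lambda\in\C$ one has $g_k(\lambda)\to g(\lambda)$ in $[0,\infty]$, and the convergence is locally uniform on $\rho(T)$.

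The key point is that assumption~\ref{ass.1.3} is precisely norm convergence $S_k\to S$ in $\mathscr{B}(\G)$, where $S:=(T-\lambda_0)^{-1}P$ and $S_k:=(T_k-\lambda_0)^{-1}P_k$ are bounded on $\G$. For $\lambda\neq\lambda_0$ the resolvent identity gives $R_k(\lambda)=S_k\,[I-(\lambda-\lambda_0)S_k]^{-1}$ whenever the inverse exists, and $\lambda\in\rho(T_k)$ if and only if $I-(\lambda-\lambda_0)S_k$ is invertible on $\G$ (the analogous statement holding for $T$ and $S$). Since $\|(I-(\lambda-\lambda_0)S_k)-(I-(\lambda-\lambda_0)S)\|=|\lambda-\lambda_0|\,\|S_k-S\|\to0$ uniformly on compact sets, for $\lambda\in\rho(T)$ the operators $I-(\lambda-\lambda_0)S_k$ are eventually invertible and $R_k(\lambda)\to R(\lambda)$, uniformly on compact subsets of $\rho(T)$; taking norms yields $g_k\to g$ there. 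For $\lambda=w\in\sigma(T)$ the operator $I-(w-\lambda_0)S$ is \emph{not} invertible. If $g_{k_j}(w)\le M$ along some subsequence, then $w\in\rho(T_{k_j})$ and $[I-(w-\lambda_0)S_{k_j}]^{-1}=I+(w-\lambda_0)R_{k_j}(w)$ has norm at most $1+|w-\lambda_0|M$; since these invertible operators converge in norm to $I-(w-\lambda_0)S$ with uniformly bounded inverses, the limit would be invertible, a contradiction. Hence $g_k(w)\to\infty=g(w)$. This last step, which covers the interior of $\sigma(T)$ (where a normal-families/Vitali argument has no convergence points to anchor it), is the one I expect to be the main obstacle, and reducing it to the norm-convergent bounded operators $S_k\to S$ is what makes it go through.

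For the inclusion of $\overline{\sigma_\varepsilon(T_k)}\cap K$ into a neighbourhood of $\overline{\sigma_\varepsilon(T)}\cap K$ I would argue by sequences: if $z_k\in\overline{\sigma_\varepsilon(T_k)}\cap K$ and, after passing to a subsequence, $z_k\to z\in K$, then continuity of $g_k$ on $\rho(T_k)$ gives $\overline{\sigma_\varepsilon(T_k)}\subseteq\{g_k\ge1/\varepsilon\}$, so $g_k(z_k)\ge1/\varepsilon$. If $z\in\sigma(T)$ then $z\in\sigma_\varepsilon(T)$ trivially. If $z\in\rho(T)$, local uniform convergence forces $z_k\in\rho(T_k)$ eventually and $g_k(z_k)\to g(z)$, whence $g(z)\ge1/\varepsilon$. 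Should $g(z)=1/\varepsilon$, assumption~\ref{ass.const} together with the maximum principle for the subharmonic function $g$ prevents $g$ from attaining a local maximum at $z$, so $z$ is a limit of points with $g>1/\varepsilon$; in every case $z\in\overline{\sigma_\varepsilon(T)}\cap K$. Compactness of $K$ converts this into the required uniform neighbourhood inclusion. This is the only place where assumption~\ref{ass.const} is used: it rules out the spurious enlargement of $\sigma_\varepsilon(T_k)$ that a flat level set of $g$ would allow.

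For the reverse inclusion I would invoke assumption~\ref{ass.K}: given $z\in\overline{\sigma_\varepsilon(T)}\cap K=\overline{\sigma_\varepsilon(T)\cap K}$ there is $w\in\sigma_\varepsilon(T)\cap K$ arbitrarily close to $z$, that is, $g(w)>1/\varepsilon$ with $w\in K$. By the pointwise convergence $g_k(w)\to g(w)$ established above---valid whether $w\in\rho(T)$ or $w\in\sigma(T)$, which is exactly why the strong form of the convergence lemma is convenient here---we get $g_k(w)>1/\varepsilon$ for all large $k$, so $w\in\sigma_\varepsilon(T_k)\cap K$. Covering the compact set $\overline{\sigma_\varepsilon(T)}\cap K$ by finitely many balls $B_{\delta/2}(w)$ of this type makes the choice of $k$ uniform and shows that, for $k$ large, every point of $\overline{\sigma_\varepsilon(T)}\cap K$ lies within $\delta$ of $\overline{\sigma_\varepsilon(T_k)}\cap K$ (which is in particular non-empty). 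Combining the two one-sided estimates yields~\eqref{dH.conv}.
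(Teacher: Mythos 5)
Your proof is correct, and its skeleton---a resolvent-norm convergence lemma followed by the two one-sided Hausdorff inclusions, with assumption~\ref{ass.const} controlling the outward inclusion and assumption~\ref{ass.K} the inward one---coincides with the paper's (Lemmas~\ref{lem.rb}--\ref{lem.K.not}). The genuinely different ingredient is how you treat points $w\in\sigma(T)$. The paper's Lemma~\ref{lem.rb} excludes them from the region of boundedness in two steps: singular sequences give $\sigma_{\rm app}(T)\subset\C\setminus\Delta_{\rm b}(\{T_k\}_k)$, and the rest of the spectrum is reached via the splitting $\sigma(T)=\sigma_{\rm app}(T)\cup\sigma_{\rm app}(T^*)^*$, i.e.\ via adjoints; this is precisely where the Hilbert-space structure enters (Remark~\ref{rem.Banach.2}). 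You instead work with the norm-convergent bounded operators $S_k:=(T_k-\lm_0)^{-1}P_k\to S:=(T-\lm_0)^{-1}P$ on $\G$, observe that $w\in\rho(T_k)$ iff $I-(w-\lm_0)S_k$ is invertible on $\G$ with inverse $I+(w-\lm_0)(T_k-w)^{-1}P_k$, and use the elementary fact that a norm limit of invertible operators with uniformly bounded inverses is invertible; this yields $\|(T_k-w)^{-1}\|\to\infty$ for every $w\in\sigma(T)$ with no mention of approximate point spectra or adjoints. Your route is shorter, exploits norm resolvent convergence fully (claims a) and b) in the paper's proof of Lemma~\ref{lem.rb} are engineered to survive under merely strong resolvent convergence, a generality the theorem does not use), and it makes the Banach-space extension of Remark~\ref{rem.Banach} automatic rather than dependent on the adjoint splitting of Remark~\ref{rem.Banach.2}. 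Two further deviations are minor and harmless: where the paper cites the known consequence of assumption~\ref{ass.const} that $\overline{\sigma_\varepsilon(T)}=\{z\in\C:\|(T-z)^{-1}\|\geq 1/\varepsilon\}$, you re-derive it on the spot via subharmonicity of the resolvent norm and the maximum principle; and for the inward inclusion you replace the balls $B_{r_\lm}(\lm)\subset\sigma_\varepsilon(T_k)$ of Lemma~\ref{lem.K} by pointwise membership for a finite $\delta/2$-net of $\overline{\sigma_\varepsilon(T)}\cap K$ chosen inside $\sigma_\varepsilon(T)\cap K$, which is exactly what assumption~\ref{ass.K} supplies.
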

\begin{remark}
\label{rem.Banach}
The conclusion of Theorem \ref{thm.psp.conv} holds also if $T$ and $T_k$ act in the same Banach space $\X$, converge in the usual norm resolvent sense and the assumptions~\ref{ass.K},~\ref{ass.const} are satisfied.
See Remark \ref{rem.Banach.2} below for the strategy of the proof in this situation.
\end{remark}

Example \ref{ex.counter.K} shows that, in general, assumption~\ref{ass.K} cannot be weakened to the condition $\overline{\sigma_\varepsilon(T)} \cap K \neq \emptyset$ 
or to the assumption $\sigma_\varepsilon(T) \cap K \neq \emptyset$ used in~\cite[Thm.~5.3]{Hansen-2011-24};
in fact, the claim of~\cite[Prop.~4.1]{Hansen-2011-24} does not hold under this weaker condition.

We remark that if $K$ is selected as a subset of $\sigma_\varepsilon(T)$, then assumption~\ref{ass.const} is not needed, \cf~Lemma \ref{lem.K}. 
However, Example~\ref{ex.counter.const} shows that assumption~\ref{ass.const} cannot be omitted in general.

\begin{example}\label{ex.counter.K}
Let $\H:=\C^2$ and $T:= \diag(\lambda_1,\lambda_2)$ with $\lambda_1 < \lambda_2$. The $\varepsilon$-pseudospectrum of $T$ is the union of the two open $\varepsilon$-balls $B_{\varepsilon}(\lambda_1)$ and $B_{\varepsilon}(\lambda_2)$. We choose $K \subset \C$ to be a rectangle that touches the first ball and contains the second ball, 
\ie~ $\overline{B_\varepsilon (\lambda_1)} \cap K = \{w_0\}$ and $B_{\varepsilon}(\lambda_2) \subset K$ as indicated in Figure \ref{fig:counter}.
As the approximating matrix, consider $T_k:= \diag ((1-1/k) \lambda_1, \lambda_2)$. Its $\varepsilon$-pseudospectrum consists also of two $\varepsilon$-balls; the first one approaches $B_{\varepsilon}(\lambda_1)$ from the left and the second one coincides with $B_{\varepsilon}(\lambda_2)$, \cf~Figure \ref{fig:counter}.
\begin{figure}[htb!]
\includegraphics[width= 0.7 \textwidth]{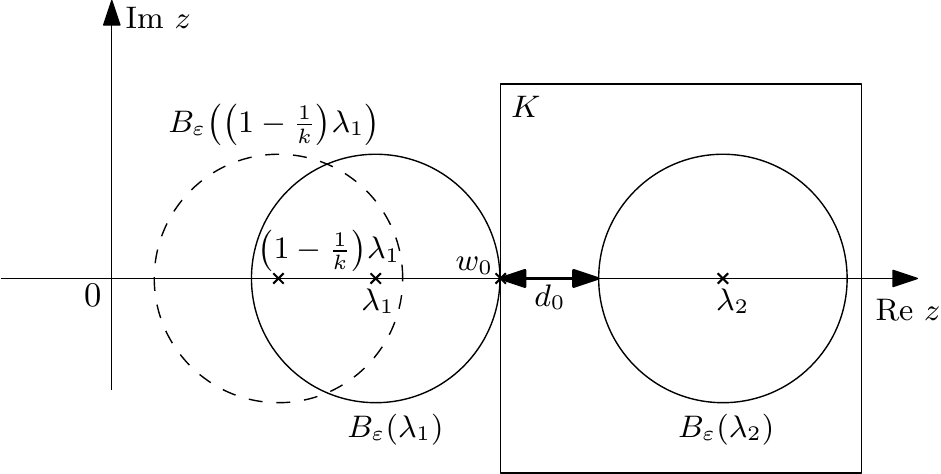}
\caption{The choice of $K$.}
\label{fig:counter}
\end{figure}

\noindent
Then the pseudospectra do not converge since
\begin{equation}
d_{\rm H} 
\left(
\overline{\sigma_{\varepsilon}(T_k)} \cap K, \overline{\sigma_{\varepsilon}(T)} \cap K
\right) 
\geq \dist\left(w_0, \overline{\sigma_{\varepsilon}(T_k)} \cap K\right)  = d_0 >0.
\end{equation}
Analogously, if we take the same $T$ and $K$, but $T_k:= \diag ((1+1/k) \lambda_1, \lambda_2)$, the first ball of $\sigma_{\varepsilon}(T_k)$ approaches $B_{\varepsilon}(\lambda_1)$ from the right, so we obtain, for all sufficiently large $k$, 
\begin{equation}
d_{\rm H} 
\left(
\overline{\sigma_{\varepsilon}(T_k) \cap K}, \overline{\sigma_{\varepsilon}(T) \cap K}
\right) \geq \dist\left(w_0, \overline{\sigma_{\varepsilon}(T) \cap K}\right) = d_0 >0.
\end{equation}
Therefore we cannot weaken assumption \ref{ass.K} to $\sigma_\varepsilon(T) \cap K  \neq \emptyset$, even with the appropriate modification in the claim.
\end{example}

\begin{example}\label{ex.counter.const}
Let $T$ be a closed operator acting in a Hilbert space~$\H$ such that $\sigma(T) \neq \emptyset$ and there exist an open set $U\subset\rho(T)$ and a constant $M>0$ such that $\|(T-\lm)^{-1}\|=M$ for all $\lm\in U$.
Such operators exist, \cf~for instance Shargorodsky's example~\cite[Thm.~3.2]{Shargorodsky-2008-40}.
Note that $U\neq\C$ since $\sigma(T)\neq\emptyset$.
We approximate $T$ by $T_k:=(1-1/k)T$; it is easy to see that $T_k \gnrlong T$.

By Theorem~\ref{thm.const.norm} below, the constant $M$ is the global minimum of $\lm\mapsto \|(T-\lm)^{-1}\|$ in~$\rho(T)$.
So we obtain, for all $k$ and all $\lm\in\rho(T_k)$,
\begin{equation}
\begin{aligned}
 \|(T_k-\lm)^{-1}\|=\left(1-\frac{1}{k}\right)^{-1}\left\|\left(T-\frac{\lm}{1-\frac{1}{k}}\right)^{-1}\right\|\geq \left(1-\frac{1}{k}\right)^{-1} M > M,
\end{aligned}
\end{equation}
hence $\sigma_{1/M}(T_k)=\C$. On the other hand, $\sigma_{1/M}(T)\cap U=\emptyset$.
Now take a compact set $K\subset\C$ such that $K\cap U\neq\emptyset$ and $K \cap \sigma(T) \neq \emptyset$, hence $K \nsubseteq U$. 
Let $z_0 \in K\cap U$. 
Then, for all $k$, we have $z_0 \in K=\overline{\sigma_{1/M}(T_k)}\cap K$, and hence
\begin{equation}
\begin{aligned}
d_{\rm H} 
\left(
\overline{\sigma_{1/M}(T_k)} \cap K, \overline{\sigma_{1/M}(T)} \cap K
\right) 
&\geq \dist\left(z_0,\overline{\sigma_{1/M}(T)}\cap K\right) \\
&\geq \dist\left(z_0, K \setminus U \right)>0.
\end{aligned}
\end{equation}
In summary, assumption \ref{ass.const} in Theorem \ref{thm.psp.conv} cannot be omitted in general.
\end{example}

For $n\in\N_0$ and $\varepsilon>0$ the $(n,\varepsilon)$-pseudospectrum of a closed operator $T$ acting in a Banach space~$\X$ is defined as the set
\begin{equation}\label{eq.def.of.sigma.n.eps}
\sigma_{n,\varepsilon}(T) := \left\{
z \in \C \, : \, \left\|(T-z)^{-2^n}\right\|^{\frac{1}{2^n}} > \frac{1}{\varepsilon}
\right\};
\end{equation}
note that for $n=0$ the notion coincides with the usual $\varepsilon$-pseudospectrum. 
The interesting property of the $(n, \varepsilon)$-pseudospectrum is that, for $n$ tending to infinity, it converges in Hausdorff distance to the $\varepsilon$-neighbourhood of the spectrum, \cf~\cite[Thm.~5.1]{Hansen-2011-24}.

The pseudospectral convergence result can be generalised for the $n$-pseudospectra; Theorem~\ref{thm.psp.conv} is the formulation for $n=0$. 
In Section~\ref{sec:proof.1} we indicate the different and additional steps that are needed for proving the following claim for $n>0$.
\begin{theorem}\label{thm.psp.conv.n}
Let $n\in\N_0$ and let $T, T_k$ be closed, densely defined operators acting in Hilbert spaces $\H,$ $\H_k$, respectively, with non-empty intersection of the resolvent sets. Let $K \subset \C$ be compact and $\varepsilon >0$. If 
\begin{enumerate}[label=\rm{\roman{*})}]
\item $\overline{\sigma_{n,\varepsilon}(T)} \cap K = \overline{\sigma_{n,\varepsilon}(T) \cap K} \neq \emptyset$,
\label{ass.K.n}
\item $\lambda\mapsto\|(T-\lambda)^{-2^n}\|$ is non-constant on any open subset of $\rho(T)$,
\label{ass.const.n}
\item $T_k \gnrlong T$,
\label{ass.1.3.n}
\end{enumerate}
then
\begin{equation}\label{dH.conv.n}
d_{\rm H} 
\left(
\overline{\sigma_{n,\varepsilon}(T_k)} \cap K, \overline{\sigma_{n,\varepsilon}(T)} \cap K
\right) \tolong 0,  \quad k \to \infty.
\end{equation}
\end{theorem}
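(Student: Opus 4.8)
The plan is to follow the proof of the case $n=0$ (Theorem~\ref{thm.psp.conv}) and to isolate the two places where the exponent $2^n$ forces a modification. Writing $g_k(z):=\|(T_k-z)^{-2^n}\|^{1/2^n}$ and $g(z):=\|(T-z)^{-2^n}\|^{1/2^n}$, with the value $+\infty$ on the respective spectra, the whole argument rests on the pointwise convergence $g_k\to g$ on $\rho(T)$ together with the description of $\overline{\sigma_{n,\varepsilon}(T)}$ as a superlevel set of $g$.

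First I would establish the operator-norm convergence of the $2^n$-th powers of the resolvents. By assumption \ref{ass.1.3.n} one has $(T_k-\lambda_0)^{-1}P_k\to(T-\lambda_0)^{-1}P$ in norm, and a resolvent-identity argument, exactly as in the proof of Theorem~\ref{thm.psp.conv}, propagates this to locally uniform convergence $(T_k-z)^{-1}P_k\to(T-z)^{-1}P$ on $\rho(T)$, with $z\in\rho(T_k)$ for all large $k$. The step that is genuinely new for $n>0$ is the passage to powers: since $(T_k-z)^{-1}P_k$ has range in $\H_k$, on which $P_k$ acts as the identity, one has $[(T_k-z)^{-1}P_k]^{2^n}=(T_k-z)^{-2^n}P_k$, and likewise for $T$; continuity of the $2^n$-th power map on norm-bounded sets then yields $(T_k-z)^{-2^n}P_k\to(T-z)^{-2^n}P$ in norm, whence $g_k(z)\to g(z)$, locally uniformly on $\rho(T)$.

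Next I would prove the two set inclusions that together are equivalent to \eqref{dH.conv.n}. For $\limsup_k(\overline{\sigma_{n,\varepsilon}(T_k)}\cap K)\subseteq\overline{\sigma_{n,\varepsilon}(T)}\cap K$: if $z_k\to z$ with $z_k\in\overline{\sigma_{n,\varepsilon}(T_k)}\cap K$, then $g_k(z_k)\ge 1/\varepsilon$, and local uniform convergence with continuity of $g$ gives $g(z)\ge 1/\varepsilon$, i.e. $z\in\{g\ge1/\varepsilon\}$ (the case $z\in\sigma(T)$ being trivial). Here assumption \ref{ass.const.n} enters: because $z\mapsto(T-z)^{-2^n}$ is operator-analytic, $\|(T-z)^{-2^n}\|$ is subharmonic, so by the maximum principle the superlevel set $\{g\ge1/\varepsilon\}$ coincides with $\overline{\sigma_{n,\varepsilon}(T)}$ precisely when $g$ is non-constant on every open subset of $\rho(T)$; hence $z\in\overline{\sigma_{n,\varepsilon}(T)}$, and by \ref{ass.K.n}, $z\in\overline{\sigma_{n,\varepsilon}(T)\cap K}=\overline{\sigma_{n,\varepsilon}(T)}\cap K$. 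For the reverse inclusion $\overline{\sigma_{n,\varepsilon}(T)}\cap K\subseteq\liminf_k(\overline{\sigma_{n,\varepsilon}(T_k)}\cap K)$: by \ref{ass.K.n} every such $z$ is a limit of points $w\in\sigma_{n,\varepsilon}(T)\cap K$, each satisfying $g(w)>1/\varepsilon$; pointwise convergence then gives $w\in\sigma_{n,\varepsilon}(T_k)$ for all large $k$, and a diagonal argument produces $z_k\to z$ with $z_k\in\overline{\sigma_{n,\varepsilon}(T_k)}\cap K$. Finally, since all the sets involved are compact subsets of the fixed compact $K$ and are non-empty by \ref{ass.K.n}, these two one-sided inclusions are equivalent to $d_{\rm H}\to 0$, giving \eqref{dH.conv.n}.

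I expect the main obstacle to be the upper-semicontinuity inclusion, and specifically the identification of $\{g\ge1/\varepsilon\}$ with $\overline{\sigma_{n,\varepsilon}(T)}$: this is exactly where non-constancy \ref{ass.const.n} is indispensable, cf. Example~\ref{ex.counter.const}, and it relies on subharmonicity of the resolvent-power norm together with the maximum principle rather than on any soft topological argument. A secondary technical point is to make the propagation of convergence from $\lambda_0$ uniform on the compact subsets of $\rho(T)$ that meet the boundary of $\sigma_{n,\varepsilon}(T)$, and to control limit points $z_k$ approaching $\sigma(T)$; both are handled as for $n=0$, the spectrum being contained in $\overline{\sigma_{n,\varepsilon}(T)}$ so that such points cause no difficulty.
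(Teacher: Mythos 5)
Your architecture largely parallels the paper's: the identity $\bigl((T_k-z)^{-1}P_k\bigr)^{2^n}=(T_k-z)^{-2^n}P_k$ and the resulting norm convergence of resolvent powers is exactly the paper's Lemma~\ref{lem.rb.all.equal}~ii), and your subharmonicity/maximum-principle argument makes explicit the identification $\overline{\sigma_{n,\varepsilon}(T)}=\{z:\|(T-z)^{-2^n}\|\geq\varepsilon^{-2^n}\}$ that the paper attributes to assumption~\ref{ass.const.n} in Lemma~\ref{lem.K.not}. However, there is a genuine gap in your liminf inclusion, at precisely the point your closing paragraph dismisses. You approximate $z\in\overline{\sigma_{n,\varepsilon}(T)}\cap K$ by points $w\in\sigma_{n,\varepsilon}(T)\cap K$ and assert that ``pointwise convergence then gives $w\in\sigma_{n,\varepsilon}(T_k)$ for all large $k$''. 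But the convergence $g_k\to g$ you established lives only on $\rho(T)$, while such $w$ may lie in $\sigma(T)$ (indeed all of $\sigma(T)\cap K$ consists of such points, and assumption~\ref{ass.K.n} gives no way to replace them by points of $\rho(T)$: the spectrum can have non-empty interior, and even when it does not, nearby resolvent-set points may leave $K$). For $w\in\sigma(T)$ nothing you have proved excludes that $w\in\rho(T_k)$ with $\|(T_k-w)^{-2^n}\|\leq\varepsilon^{-2^n}$ for infinitely many $k$, in which case your approximating sequence does not exist. Note that the ``spectrum causes no difficulty'' remark is correct only for the limsup inclusion (since $\sigma(T)\subset\sigma_{n,\varepsilon}(T)$); for the liminf inclusion the spectrum is the hard case, not the trivial one.

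What is missing is exactly the hard half of the paper's Lemma~\ref{lem.rb} and its generalisation Lemma~\ref{lem.rb.all.equal}~i): that the region of boundedness satisfies $\Delta_{\rm b}^{(n)}(\{T_k\}_k)=\rho(T)$, so that no point of $\sigma(T)$ admits a subsequence along which it lies in $\rho(T_k)$ with uniformly bounded $\|(T_k-\cdot)^{-2^n}\|$. This statement does \emph{not} follow from norm convergence of resolvents on $\rho(T)$; the paper proves it via approximate eigenvectors, density of the domains, adjoints and the splitting $\sigma(T)=\sigma_{\rm app}(T)\cup\sigma_{\rm app}(T^*)^*$ (claims a) and b) in the proof of Lemma~\ref{lem.rb}), together with Lemma~\ref{lem.res.powers} to pass between $\Delta_{\rm b}^{(n)}$ and $\Delta_{\rm b}$ --- this is where the Hilbert-space structure enters, and none of it appears in your proposal. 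With this ingredient the gap closes quickly: if $w\in\sigma(T)$ failed to lie in $\sigma_{n,\varepsilon}(T_k)$ for all $k$ in an infinite set $I$, then $w\in\Delta_{\rm b}^{(n)}(\{T_k\}_{k\in I})$ with bound $\varepsilon^{-2^n}$, while $\{T_k\}_{k\in I}$ still converges to $T$ in the generalised norm resolvent sense, so $\Delta_{\rm b}^{(n)}(\{T_k\}_{k\in I})=\rho(T)\not\ni w$, a contradiction. This is in substance the paper's Lemma~\ref{lem.K}, case i), for which the paper also needs the expansion Lemma~\ref{lem.res.expansion.n} and the replacement of~\eqref{Rn.ineq.2} by $\|R_k(\lm_k)^{2^n}\|\geq|\lm-\lm_k|^{-2^n}$ --- \ie~the modifications i)--iii) listed before Lemma~\ref{lem.res.powers}. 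Until you supply this region-of-boundedness argument, the liminf half of your proof does not go through.
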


\begin{remark}
If assumption~\ref{ass.K.n} is omitted, the operators in Example~\ref{ex.counter.K} remain 
relevant counterexamples since they are selfadjoint, hence 
$\sigma_{n,\varepsilon}(T) = \sigma_{\varepsilon}(T)$ for each $n\in\N$. 

If, for an $n>0$, assumption~\ref{ass.const.n} is omitted, an analogous counterexample as Example~\ref{ex.counter.const} can be constructed.
In order to find an operator $T$ with $\sigma(T)\neq\emptyset$  whose $2^n$-th power of the resolvent has constant norm on an open set, one may proceed as in Shargorodsky's example~\cite[Thm.~3.2]{Shargorodsky-2008-40} for $n=0$.
More exactly, let $T:={\rm diag}(B_1,B_2,B_3,\dots)$ with $2^{n+1}\times 2^{n+1}$-matrices $B_k,\,k\in\N,$ of the form 
\begin{equation}
 B_k:=\begin{pmatrix} 0 & A_k\\ \widetilde A_k & 0\end{pmatrix}.
\end{equation}
The entries of the $2^n\times 2^n$-matrices $A_k,\widetilde A_k$ are chosen in such a way that $\|(B_k-\lm)^{-2^n}\|\to 1$ as $k\to\infty$ and $\|(B_k-\lm)^{-2^n}\|<1$, $k\in\N$, for all $\lm$ in a neighbourhood of $\lm=0$.
Then $\|(T-\lm)^{-2^n}\|\equiv 1$ on this neighbourhood.
For instance for $n=1$ such an example is  $T:={\rm diag}(B_1,B_2,B_3,\dots)$ with
\begin{equation}
 B_k:=\begin{pmatrix} 0 & 0 & 0 & \beta_k\\ 0 & 0 & \alpha_k & 0\\ \alpha_k & 0 & 0 & 0\\ 0 & \beta_k & 0 & 0\end{pmatrix}, \quad k\in\N,
\end{equation}
where $\alpha_k\geq 2$, $\alpha_k\to\infty$ as $k\to\infty$, and $\beta_k:=1+1/\alpha_k$.
\end{remark}

\section{Open sets with constant resolvent norm}
\label{sec:const}

We consider a closed operator $T$ that acts in a complex uniformly convex Banach space $\X$. 
We recall here the definition, \cf~\cite{Globevnik-1975-47}. 
\begin{define}\label{def.cuc}
 A complex normed space $\X$ is called \emph{complex uniformly convex} if for every $\varepsilon>0$ there exists $\delta>0$ such that 
for all $x,y\in\X${\rm:}
\begin{equation}
\forall\,\zeta\in B_1(0):\,\|x+ \zeta y\|\leq 1, \quad \|y\|>\varepsilon\quad  \Longrightarrow \quad \|x\|< 1-\delta. 
\end{equation}
\end{define}

\begin{theorem}\label{thm.const.norm}
Let $T$ be a closed operator in a complex uniformly convex Banach space $\X$.
If there exist an open subset  $U\subset\rho(T)$ and a constant $M>0$ such that
\begin{equation}
\|(T-\lambda)^{-1}\| =M, \quad \lambda \in U, 
\end{equation}
then $\|(T-\lambda)^{-1}\|\geq M$ for all $\lm\in\rho(T)$.
\end{theorem}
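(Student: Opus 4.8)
The plan is to convert the constancy of the resolvent norm on $U$ into a quantitative rigidity statement through the modulus of complex uniform convexity, and then to propagate the resulting bound \emph{algebraically} by the resolvent identity. For $x\in\X$ the map $g_x(\lambda):=(T-\lambda)^{-1}x$ is an $\X$-valued analytic function on $\rho(T)$ with $g_x'(\lambda)=(T-\lambda)^{-2}x$, and $\lambda\mapsto\|g_x(\lambda)\|$ is subharmonic. Mere subharmonicity, or even the qualitative maximum-modulus principle available in complex \emph{strictly} convex spaces, will not suffice here: an exact unit maximiser $x$ of $\|(T-\lambda_0)^{-1}\|$ would force $g_x$ to have an interior norm-maximum and hence be constant near $\lambda_0$, which is impossible because $(T-\lambda)^{-1}x\equiv c$ yields $x=(T-\lambda)c$ for all $\lambda$ and thus $x=0$. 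So no maximiser exists, and this is precisely the main obstacle: the argument must run through a maximising sequence and use the \emph{quantitative} content of Definition~\ref{def.cuc}.

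Concretely, I would first record the mean-value form of complex uniform convexity: there is a modulus $\eta$ with $\eta(t)>0$ for $t>0$ and $\eta(0)=0$ such that, for every $\X$-valued analytic $g$ on a disc $\overline{B_s(\lambda_0)}$,
\begin{equation}
\frac{1}{2\pi}\int_0^{2\pi}\|g(\lambda_0+se^{\ii\theta})\|\,\dd\theta \;\geq\; \|g(\lambda_0)\|\left(1+\eta\!\left(\frac{\|s\,g'(\lambda_0)\|}{\|g(\lambda_0)\|}\right)\right),
\end{equation}
which one obtains by combining the mean-value identities $g(\lambda_0)=\frac{1}{2\pi}\int_0^{2\pi}g(\lambda_0+se^{\ii\theta})\,\dd\theta$ and $s\,g'(\lambda_0)=\frac{1}{2\pi}\int_0^{2\pi}e^{-\ii\theta}g(\lambda_0+se^{\ii\theta})\,\dd\theta$ with Definition~\ref{def.cuc}. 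Now fix $\lambda_0\in U$ and $s>0$ with $\overline{B_s(\lambda_0)}\subset U$. For $\varepsilon>0$ choose a unit vector $x_\varepsilon$ with $\|(T-\lambda_0)^{-1}x_\varepsilon\|\geq M-\varepsilon$. Since $\|g_{x_\varepsilon}(\lambda)\|\leq\|(T-\lambda)^{-1}\|=M$ for all $\lambda\in U$, the left-hand side above is $\leq M$, while $\|g_{x_\varepsilon}(\lambda_0)\|\geq M-\varepsilon$; the displayed inequality forces $\eta(\cdot)\to 0$ and hence
\begin{equation}
\|(T-\lambda_0)^{-2}x_\varepsilon\|=\|g_{x_\varepsilon}'(\lambda_0)\|\leq \delta(\varepsilon),\qquad \delta(\varepsilon)\tolong 0 \ \text{ as } \varepsilon\to 0 .
\end{equation}

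It remains to transfer this to an arbitrary $\mu\in\rho(T)$, and here the resolvent identity does the work without any appeal to connectedness of $\rho(T)$. Set $y_\varepsilon:=(T-\lambda_0)^{-1}x_\varepsilon$, so that $\|y_\varepsilon\|\geq M-\varepsilon$ and $\|(T-\lambda_0)^{-1}y_\varepsilon\|\leq\delta(\varepsilon)$. Applying $(T-\mu)^{-1}-(T-\lambda_0)^{-1}=(\mu-\lambda_0)(T-\mu)^{-1}(T-\lambda_0)^{-1}$ to $y_\varepsilon$ gives $\|(T-\mu)^{-1}y_\varepsilon\|\leq\big(1+|\mu-\lambda_0|\,\|(T-\mu)^{-1}\|\big)\delta(\varepsilon)$, and applying it to $x_\varepsilon$ gives $(T-\mu)^{-1}x_\varepsilon=y_\varepsilon+(\mu-\lambda_0)(T-\mu)^{-1}y_\varepsilon$. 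Hence
\begin{equation}
\|(T-\mu)^{-1}\|\geq\|(T-\mu)^{-1}x_\varepsilon\|\geq\|y_\varepsilon\|-|\mu-\lambda_0|\,\|(T-\mu)^{-1}y_\varepsilon\|\geq (M-\varepsilon)-C_\mu\,\delta(\varepsilon),
\end{equation}
where $C_\mu:=|\mu-\lambda_0|\big(1+|\mu-\lambda_0|\,\|(T-\mu)^{-1}\|\big)$ is finite and depends only on $\mu$ and $\lambda_0$. Letting $\varepsilon\to 0$ yields $\|(T-\mu)^{-1}\|\geq M$, as required.

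I expect the genuinely delicate step to be the quantitative extraction of the smallness of $g_{x_\varepsilon}'(\lambda_0)$: one must make the passage from Definition~\ref{def.cuc} to a usable mean-value gap rigorous, in particular controlling the higher-order Taylor terms that are also present in the circular mean, and checking that the modulus $\eta$ is genuinely a feature of $\X$ alone (uniform in the vectors). Everything else—the choice of near-maximisers, the two resolvent-identity estimates, and the final limit—is routine. A pleasant feature of this route is that the conclusion is obtained simultaneously on every connected component of $\rho(T)$, precisely because the transfer to $\mu$ is algebraic rather than by analytic continuation.
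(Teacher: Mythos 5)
Your proof has the same two-part architecture as the paper's: (1) at a fixed $\lambda_0\in U$, use complex uniform convexity on near-maximising unit vectors $x_\varepsilon$ to get $\|(T-\lambda_0)^{-1}x_\varepsilon\|\to M$ together with $\|(T-\lambda_0)^{-2}x_\varepsilon\|\to 0$; (2) transfer to an arbitrary $\mu\in\rho(T)$ by applying the first resolvent identity twice. Part (2) of your argument is, up to a triangle inequality, identical to the paper's estimate \eqref{eq.res.difference.const}, and both arguments indeed avoid any connectedness assumption on $\rho(T)$. The genuine gap is in part (1), exactly the step you flag as delicate: the displayed ``mean-value form of complex uniform convexity'' is never proved, and it cannot be obtained in the way you suggest. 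Definition~\ref{def.cuc} has as its hypothesis a bound $\|x+\zeta y\|\leq 1$ valid for \emph{all} $\zeta\in B_1(0)$, i.e.\ a bound on the whole linear pencil built from the pair $(x,y)$; the two identities $g(\lambda_0)=\frac{1}{2\pi}\int_0^{2\pi} g\,\dd\theta$ and $s\,g'(\lambda_0)=\frac{1}{2\pi}\int_0^{2\pi} e^{-\ii\theta}g\,\dd\theta$ provide no such pencil bound, so there is nothing to which the definition can be applied. Moreover, with the circular \emph{mean} on the left-hand side your inequality is substantially stronger than what you actually use: it asserts that the sup-type modulus of Definition~\ref{def.cuc} controls an $H^1$-type modulus for arbitrary analytic functions, which is a serious theorem in the Davis--Garling--Tomczak-Jaegermann circle of ideas, not a consequence of two Fourier identities.

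The repair is to state your lemma with $\sup_{\theta}\|g(\lambda_0+se^{\ii\theta})\|$ in place of the mean --- this is all you use anyway, since you bound the left-hand side by $M$ through the pointwise bound on $U$, not through genuine averaging. For $|\alpha|\leq 1$ the kernel $1+\Re(\alpha e^{-\ii\theta})$ is non-negative, and expanding $g$ in a Taylor series gives the identity
\begin{equation}
g(\lambda_0)+\frac{\alpha s}{2}\,g'(\lambda_0)
=\frac{1}{2\pi}\int_0^{2\pi}g\!\left(\lambda_0+s e^{\ii\theta}\right)
\left(1+\Re\!\left(\alpha e^{-\ii\theta}\right)\right)\dd\theta,
\end{equation}
because the $\overline{\alpha}$-part of the kernel meets only the modes $e^{\ii(j+1)\theta}$, $j\geq 0$, which integrate to zero. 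Hence $\left\|g(\lambda_0)+\frac{\alpha s}{2}g'(\lambda_0)\right\|\leq\sup_{\theta}\left\|g\!\left(\lambda_0+se^{\ii\theta}\right)\right\|\leq M$ for all $|\alpha|\leq 1$, and the contrapositive of Definition~\ref{def.cuc}, applied to $x:=g_{x_\varepsilon}(\lambda_0)/M$ and $y:=\frac{s}{2M}\,g_{x_\varepsilon}'(\lambda_0)$, yields precisely your smallness statement $\|(T-\lambda_0)^{-2}x_\varepsilon\|\leq\delta(\varepsilon)\to 0$, with $\delta$ depending only on $\X$. This pencil bound is exactly the content of \cite[Lem.~1.1]{Globevnik-1974-15} for the first Taylor coefficient, which is what the paper's proof invokes at this very point (the paper then uses the definition qualitatively along a maximising sequence rather than through a modulus). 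With this replacement your argument is complete and coincides in substance with the paper's proof.
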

\begin{proof}
The proof is based on \cite[Lem.~1.1]{Globevnik-1974-15} and a straightforward generalisation of \cite[Lem.~3.0]{Globevnik-1974-15} to the complex uniformly convex spaces.

Without loss of generality we assume that $M=1$.
Let $\lambda_0 \in U$. Then the resolvent can be expanded as
\begin{equation}
 f(\zeta):=(T-(\lm_0+\zeta))^{-1}=\sum_{j=0}^{\infty}A_j\,\zeta^j, \quad A_0:=(T-\lm_0)^{-1}, \quad A_j:=A_0^{j+1},
\end{equation}
for $\zeta \in \C$ with $|\zeta|<1/\|A_0\|=1$. 
So there exists a neighbourhood of $\zeta=0$ where $f$ is analytic and $\|f\|\equiv 1 =\|A_0\|$. Hence we can apply \cite[Lem.~1.1]{Globevnik-1974-15} which
yields that for every index $j>0$ there exists $r_j>0$ such that 
$\|A_0+\zeta A_j\|\leq \|A_0\|=1$, $|\zeta|\leq r_j$.
This implies that every $u\in \X$ with $\|u\|=1$ satisfies
\begin{equation}\label{estimate.for.x}
\forall\,\zeta\in B_1(0):\quad  \|A_0u+\zeta\, r_j A_j u \|\leq 1.
\end{equation}

There exists a sequence $\{e_k\}_k\subset\X$ with $\|e_k\|=1$ such that 
\begin{equation}\label{lim.A0}
\lim_{k\to\infty} \|(T-\lm_0)^{-1}e_k\|=\lim_{k\to\infty}\| A_0 e_k \|=\|A_0\|= 1.
 \end{equation}
Define $x_k:= A_0 e_k$. Then~\eqref{lim.A0} can be rewritten as $\|x_k\| \to 1$.
Assume that there exist $\varepsilon>0$ and an infinite subset $I\subset\N$ such that $y_k:=r_1  A_1 e_k$ satisfies $\|y_k\|>\varepsilon$, $k\in I$.
The inequality~\eqref{estimate.for.x} with $j:=1$ and $u:=e_k$ implies
\begin{equation}
\forall\,\zeta\in B_1(0):\quad  \|x_k+\zeta y_k\|\leq 1.
\end{equation}
The complex uniform convexity of $\X$ yields the existence of some $\delta>0$ such that $\|x_k\|< 1-\delta$, $k\in I$; this is a contradiction to $\|x_k\| \to 1$.
Therefore  $\|y_k\| \to 0$, and hence 
\begin{equation}\label{lim.A1}
 \lim_{k\to\infty} \|(T-\lm_0)^{-2}e_k\|=\lim_{k\to\infty} \|A_1e_k\|=0.
\end{equation}

Now, for an arbitrary $\lm\in\rho(T)$, using twice the first resolvent identity, we obtain
\begin{equation}
\begin{aligned}
(T-\lm)^{-1}-(T-\lm_0)^{-1} & = (\lm-\lm_0) (T-\lm)^{-1}(T-\lm_0)^{-1}
\\ 
& = (\lm-\lm_0) 
\left(
I + (\lm-\lm_0) (T-\lm)^{-1}
\right) (T-\lm_0)^{-2}.
\end{aligned}
\end{equation}
So we have
\begin{equation}\label{eq.res.difference.const}
\begin{aligned}
\left\|(T-\lm)^{-1} \right\| & \geq \|(T-\lm_0)^{-1}e_k\| 
\\
& \quad -   
|\lm-\lm_0| \|I+(\lm-\lm_0) (T-\lm)^{-1} \| \|(T-\lm_0)^{-2}e_k \|.
\end{aligned}
\end{equation}
Finally, the limits \eqref{lim.A0} and \eqref{lim.A1} yield $\|(T-\lm)^{-1}\|\geq 1 =M$.
\end{proof}

Theorem~\ref{thm.const.norm} yields another proof for the following result, \cf~\cite{Shargorodsky-2008-40,Boettcher-1997-3,Shargorodsky-2010-42}.
\begin{corollary}\label{cor.bdd.or.semigroup}
Let $\X$ be a complex uniformly convex Banach space. If a closed operator $T$ is bounded or generates a $C_0$ semigroup, then its resolvent norm cannot be constant on any open subset of $\rho(T)$.
\end{corollary}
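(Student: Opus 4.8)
The plan is to argue by contradiction, using Theorem~\ref{thm.const.norm} to reduce everything to a single decay estimate. Suppose the resolvent norm of $T$ were constant, say $\|(T-\lambda)^{-1}\| = M$ for all $\lambda$ in some open set $U \subset \rho(T)$, with $M>0$. Since $\X$ is complex uniformly convex, Theorem~\ref{thm.const.norm} applies and tells us that $M$ is the \emph{global} minimum of the resolvent norm, that is,
\[
\|(T-\lambda)^{-1}\| \geq M > 0 \qquad \text{for all } \lambda\in\rho(T).
\]
It therefore suffices, in each of the two cases, to produce a sequence $\{\lambda_m\}_m \subset \rho(T)$ along which $\|(T-\lambda_m)^{-1}\| \to 0$: this contradicts the strictly positive lower bound and shows that no such open set $U$ can exist, which is exactly the claim.

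For the bounded case I would invoke the Neumann series. For $|\lambda|>\|T\|$ one has $\lambda\in\rho(T)$ and $(T-\lambda)^{-1} = -\lambda^{-1}\sum_{j\geq 0}(T/\lambda)^j$, so that $\|(T-\lambda)^{-1}\| \leq (|\lambda|-\|T\|)^{-1}$, which tends to $0$ as $|\lambda|\to\infty$. Taking for instance $\lambda_m := m$ yields the desired null sequence of resolvent norms.

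For the semigroup case I would use the classical Laplace-transform representation of the resolvent. If $T$ generates a $C_0$ semigroup $\{e^{tT}\}_{t\geq 0}$, there exist $M_0\geq 1$ and $\omega\in\R$ with $\|e^{tT}\|\leq M_0 e^{\omega t}$; moreover the half-plane $\{\lambda : \Re\lambda>\omega\}$ is contained in $\rho(T)$ and there $(T-\lambda)^{-1} = -\int_0^\infty e^{-\lambda t}\,e^{tT}\,\dd t$. Estimating the integral gives $\|(T-\lambda)^{-1}\| \leq M_0/(\Re\lambda-\omega)$, which tends to $0$ as $\Re\lambda\to+\infty$. Choosing $\lambda_m := \omega + m$ again produces a sequence of resolvent norms converging to $0$.

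I do not expect a genuine obstacle here: once Theorem~\ref{thm.const.norm} is available, both cases follow from standard resolvent decay at infinity. The only point requiring mild care is to verify that the chosen points $\lambda_m$ actually lie in $\rho(T)$ — guaranteed by $|\lambda|>\|T\|$ in the bounded case and by $\Re\lambda>\omega$ in the semigroup case — so that the global lower bound of Theorem~\ref{thm.const.norm} genuinely applies along the sequence and the contradiction is legitimate.
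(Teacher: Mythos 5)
Your proof is correct and follows essentially the same route as the paper: invoke Theorem~\ref{thm.const.norm} to turn the constant into a global positive lower bound, then contradict it with resolvent decay at infinity, via the Neumann series bound $\|(T-\lambda)^{-1}\|\leq 1/(|\lambda|-\|T\|)$ in the bounded case and the Hille--Yosida-type bound $\|(T-\lambda)^{-1}\|\leq C/(\Re\lambda-\omega)$ in the semigroup case. The only cosmetic difference is that you derive the latter estimate from the Laplace-transform representation of the resolvent, whereas the paper simply cites the Hille--Yosida theorem.
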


\begin{proof}
The claim immediately follows from Theorem~\ref{thm.const.norm} since
the resolvent norm decays along an infinite ray in both cases;
 for $T$ bounded we have $\|(T-\lm)^{-1}\|\leq 1/(|\lm|-\|T\|)$ for all $\lm\in\C$ with $|\lm|>\|T\|$ 
and, for $T$ generating a $C_0$ semigroup, the Hille-Yosida Theorem, \cf~for instance \cite[Thm.~II.3.8]{Engel-Nagel}, yields the existence of $C>0$, $\omega\in\R$ such that $\|(T-\lm)^{-1}\|\leq C/(\lm-\omega)$ for all real $\lm>\omega$. 
\end{proof}

Theorem~\ref{thm.const.norm} can be generalised for higher powers of the resolvent. 
In an application to $n$-pseudospectra we set $l:=2^n$.
\begin{theorem}\label{thm.const.norm.n}
Let $l\in\N$ and let $T$ be a closed operator in a complex uniformly convex Banach space $\X$.
If there exist an open subset  $U\subset\rho(T)$ and a constant $M>0$ such that
\begin{equation}
\|(T-\lambda)^{-l}\| =M, \quad \lambda \in U, 
\end{equation}
then $\|(T-\lambda)^{-l}\|\geq M$ for all $\lm\in\rho(T)$.
\end{theorem}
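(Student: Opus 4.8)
The plan is to follow the proof of Theorem~\ref{thm.const.norm} (the case $l=1$), replacing the single resolvent $(T-\lambda_0)^{-1}$ by its $l$-th power and isolating the role of $(T-\lambda_0)^{-(l+1)}$. As before I normalise $M=1$ and fix $\lambda_0\in U$. Writing $R_0:=(T-\lambda_0)^{-1}$, I consider the function $g(\zeta):=(T-(\lambda_0+\zeta))^{-l}$, which is analytic in a neighbourhood of $\zeta=0$ and satisfies $\|g(\zeta)\|\equiv 1=\|R_0^l\|$ there, since $\lambda_0+\zeta\in U$ for $|\zeta|$ small. Differentiating the resolvent gives $g(\zeta)=\sum_{j\ge 0}\binom{l+j-1}{j}R_0^{\,l+j}\zeta^j$, so the zeroth and first Taylor coefficients are $C_0=R_0^l$ and $C_1=l\,R_0^{\,l+1}$.

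I would then invoke \cite[Lem.~1.1]{Globevnik-1974-15} exactly as in Theorem~\ref{thm.const.norm}, with $g$, $C_0$, $C_1$ in the roles of $f$, $A_0$, $A_1$: since $\|g\|$ is constant and equal to $\|C_0\|$, there is $r_1>0$ with $\|C_0+\zeta C_1\|\le\|C_0\|=1$ for $|\zeta|\le r_1$. Choosing $\{e_k\}\subset\X$ with $\|e_k\|=1$ and $\|R_0^le_k\|\to\|R_0^l\|=1$, and setting $x_k:=R_0^le_k$, $y_k:=r_1C_1e_k$, the complex uniform convexity argument of Theorem~\ref{thm.const.norm} forces $\|y_k\|\to 0$, that is,
\begin{equation}
\lim_{k\to\infty}\|(T-\lambda_0)^{-(l+1)}e_k\|=0.
\end{equation}
This part is essentially verbatim, with $R_0^{\,l}$ and $R_0^{\,l+1}$ taking over the roles of $A_0$ and $A_1$.

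The genuinely new ingredient is the final comparison step, and this is where I expect the main difficulty. For $l=1$ the resolvent identity directly writes $(T-\lambda)^{-1}-R_0$ as a bounded operator times $R_0^2$; for general $l$ the naive telescoping $R^l-R_0^l=\mu\sum_{i=0}^{l-1}R^{\,l-i}R_0^{\,i+1}$ is useless, since it contains low powers of $R_0$ (e.g.\ $R^lR_0$) that I cannot control on $e_k$. Instead, writing $R:=(T-\lambda)^{-1}$, $\mu:=\lambda-\lambda_0$ and $S:=I+\mu R$, the resolvent identity gives $R=R_0S$, whence $R^l=R_0^lS^l$, and crucially $S-I=\mu R=\mu R_0S$. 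Factoring $S^l-I=(S-I)\sum_{i=0}^{l-1}S^i$ then yields the exact algebraic identity
\begin{equation}
R^l=R_0^l+\mu\,R_0^{\,l+1}B,\qquad B:=S\sum_{i=0}^{l-1}S^i,
\end{equation}
valid for every $\lambda\in\rho(T)$ with $B$ bounded. Applying this to $e_k$, using that $B$ commutes with $R_0^{\,l+1}$ and the bound $\|R^l\|\ge\|R_0^le_k\|-|\mu|\,\|B\|\,\|R_0^{\,l+1}e_k\|$, the two limits $\|R_0^le_k\|\to 1$ and $\|R_0^{\,l+1}e_k\|\to 0$ give $\|(T-\lambda)^{-l}\|\ge 1=M$ for all $\lambda\in\rho(T)$. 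The only points requiring care are the boundedness of $B$ and its commutation with $R_0^{\,l+1}$, both of which hold because $S=I+\mu(T-\lambda)^{-1}$ is a bounded function of $T$.
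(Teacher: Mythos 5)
Your proposal is correct and follows essentially the same route as the paper: the same Taylor expansion of $(T-(\lambda_0+\zeta))^{-l}$ with coefficients $(T-\lambda_0)^{-l}$ and $l(T-\lambda_0)^{-(l+1)}$, the same Globevnik-lemma plus complex-uniform-convexity argument producing a normalised sequence $\{e_k\}$ with $\|(T-\lambda_0)^{-l}e_k\|\to 1$ and $\|(T-\lambda_0)^{-(l+1)}e_k\|\to 0$, and the same final step writing $(T-\lambda)^{-l}-(T-\lambda_0)^{-l}$ as a bounded operator times $(T-\lambda_0)^{-(l+1)}$ via the first resolvent identity. The only cosmetic difference is that you obtain that bounded factor by the geometric factorisation $S^l-I=(S-I)\sum_{i=0}^{l-1}S^i$ with $S=I+(\lambda-\lambda_0)(T-\lambda)^{-1}$, whereas the paper expands $S^l-I$ by the binomial theorem; both give the same commuting bounded operator and the same conclusion.
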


\begin{proof}
Without loss of generality we assume that $M=1$.
 For some $\lm_0\in U$ we expand
\begin{equation}
 f(\zeta):=(T-(\lm_0+\zeta))^{-l}=(T-\lm_0)^{-l}+l(T-\lm_0)^{-(l+1)}\zeta+\mathcal O(\zeta^2),
\end{equation}
for $\zeta\in\C$ with $|\zeta|< 1/\|(T-\lm_0)^{-1}\|$.
By proceeding analogously as in the proof of Theorem~\ref{thm.const.norm},
we find a normalised sequence $\{e_k\}_k\subset\X$ such that
\begin{equation}\label{eq.re.differnece.const.l}
 \lim_{k\to\infty}\left\|(T-\lm_0)^{-l}e_k\right\|=1,\quad \lim_{k\to\infty}\left\|(T-\lm_0)^{-(l+1)}e_k\right\|=0.
\end{equation}
Take an arbitrary $\lm\in\rho(T)$. By the first resolvent identity and binomial theorem, we obtain
\begin{equation}
\begin{aligned}
&(T-\lm)^{-l}-(T-\lm_0)^{-l}= B_{\lm}\, (T-\lm_0)^{-(l+1)},\\
&B_{\lm}:=\sum_{j=0}^{l-1}\begin{pmatrix}l\\ j+1\end{pmatrix} (\lm-\lm_0)^{j+1}\left(I+(\lm-\lm_0)(T-\lm)^{-1}\right)^{j+1}(T-\lm_0)^{-j}.
\end{aligned}
\end{equation}
Note that $B_{\lm}$ is a bounded operator.
Now, in a way analogous to~\eqref{eq.res.difference.const}, one may show that~\eqref{eq.re.differnece.const.l}  implies $\|(T-\lm)^{-l}\|\geq 1=M$.
\end{proof}

\begin{remark}
If a closed operator $T$ in a complex uniformly convex Banach space has decaying resolvent norm $\|(T-\lm)^{-1}\|$ as $\lm\in\rho(T)$ tends to infinity along some path, 
then also each power of the resolvent has decaying norm.
In this case, by Theorem~\ref{thm.const.norm.n}, no power of the resolvent can have constant norm on any open subset of $\rho(T)$.
Therefore assumption~\ref{ass.const.n} of Theorem~\ref{thm.psp.conv.n} is satisfied for any $n\in\N_0$.
This applies in particular for operators that are bounded or generators of $C_0$ semigroups, \cf~Corollary~\ref{cor.bdd.or.semigroup} and its proof.
\end{remark}

We present several examples illustrating various behaviours of the resolvent norm. All belong to a class inspired by \cite{Balakrishnan-1993-6} and have the form of block operator matrices acting in a Hilbert space $\H \oplus \H$:
\begin{equation}\label{form.of.matrixA}
\mathcal{A}: = 
\begin{pmatrix}
0 & f(A) \\
A & 0
\end{pmatrix}, \qquad \Dom(\mathcal{A}) := \Dom(A) \oplus \Dom(f(A)),
\end{equation}
where $A = A^*$ is a strictly positive operator
and $f :\R \to \R$ is a continuous positive function such that $\lim_{x\to\infty}f(x)=C$ for some $C\in [0,\infty]$. 
It is easy to verify that $\mathcal{A}$ is a closed operator. 
Moreover, it follows from~\cite[Thm.~2.3.7~i)]{Tretter-2008} that $\lm\in\rho(\mathcal A)$ if and only if $0\in\rho(f(A)-\lm^2A^{-1})$.
If $C\neq 0$, one may verify that the latter is equivalent to
$\lambda^2 \in\rho(Af(A))$; then
\begin{equation}\label{A.res.form}
(\mathcal A -\lambda)^{-1} = 
\begin{pmatrix}
\lambda (Af(A)-\lambda^2)^{-1} & f(A)(Af(A)-\lambda^2)^{-1} \\
A (Af(A)-\lambda^2)^{-1} & \lambda (Af(A)-\lambda^2)^{-1}
\end{pmatrix}.
\end{equation}
Various further assumptions on the spectrum of $A$ and the function $f$ are imposed in the individual examples below.

 If the resolvent of $A$ is assumed to be compact, we denote by $\{\alpha_k\}_k$, $\{e_k\}_k$ the sets of eigenvalues and corresponding eigenvectors of $A$.
Inspired by the strategy in \cite[Thm.3.2]{Shargorodsky-2008-40}, we use that $\span\{(e_k,0)^t,(0,e_k)^t\},$ $k\in \N$, are orthogonal invariant subspaces of $\mathcal A$ and their span is dense in $\H \oplus \H$.
The matrix representation of $\mathcal A$ with respect to $\{(e_k,0)^t,(0,e_k)^t\}$ is 
\begin{equation}\label{eq.def.B.k}
B_k:=
\begin{pmatrix}
0 & f(\alpha_k) \\
\alpha_k & 0
\end{pmatrix}.
\end{equation}

If we take $\H:=L^2(\R^d)$, $A := \pm \Delta$, $\Dom(A) := W^{2,2}(\R^d)$ and $f(x):=1$, then $\mathcal{A}$ corresponds to a generator of the Klein-Gordon or wave equation (without potentials). 
However, the crucial point is that we consider a ``wrong space'' for $\mathcal{A}$, \ie~not the energy space, therefore $\mathcal A$ is non-selfadjoint in both latter examples. 

In spite of the simple structure of $\mathcal{A}$, the variety of resolvent behaviours appears to be quite rich:
\begin{enumerate}[label=\rm{\roman{*})}]
 \item \label{ex.ass.empty} If $A$  has  compact resolvent and $f(x) \to 0$ as $x \to \infty$, then $\rho(\mathcal A) =\emptyset$, \cf~Example \ref{ex.empty.res}.
 \item \label{ex.ass.const} Assume that $A$  has  compact resolvent and there exists a constant $C>0$ such that $f(x) \to C$ as $x \to  \infty$. If, in addition,
\begin{equation}\label{assumption.example.const.norm}
\exists\,m\geq 0, \,\forall\,k\in\N:\quad f(\alpha_k)^2 \geq C^2 -\frac{m}{\alpha_k},
\end{equation}
then $\mathcal A$ has constant resolvent norm on a non-empty open set, \cf~Example~\ref{ex.const.norm}.
 
Notice that Shargorodsky's example, \cf~\cite[Thm.~3.2]{Shargorodsky-2008-40}, can be written in the form~\eqref{form.of.matrixA}. To this end, set $A:= \diag (\alpha_1,\alpha_2,\alpha_3, \dots)$, $\alpha_k\geq 2$, $\alpha_k\to \infty$, $f(x) := 1 + 1/x$ and perform the unitary transform $(e_k,0)^t \mapsto (0,e_k)^t$ and $(0,e_k)^t \mapsto (e_k,0)^t$. 
Assumption~\eqref{assumption.example.const.norm} is satisfied with $C:=1$ and $m:=0$.
\item \label{ex.ass.non.const} If $A$ and $f$ are as in ii), however, the condition \eqref{assumption.example.const.norm} is not satisfied, then the resolvent norm is not constant on any open set, \cf~Example~\ref{ex.non.const.norm}.
\item \label{ex.ass.sg} Finally, if $f(x) = |x|^\beta$ with some $\beta \in (0,1)$, then, for $\lm:=r\,{\rm e}^{\ii\varphi}$ with $\varphi\notin~\{0,\pi\}$, the resolvent norm decays in the limit $r\to\infty$, with
\begin{equation}\label{res.norm.decay.estimate}
\|(\mathcal A-r {\rm e}^{\ii\varphi})^{-1}\| = \mathcal O( r^{-2\beta / (1+\beta)} ).
 \end{equation}
 If, in addition, $[c,\infty) \subset \sigma(A)$ for some $c>0$, then for every $\varphi \notin \{0,\pi\}$ there exists $\omega>0$ such that 
\begin{equation}\label{ex.sg.dr}
r^{2\beta / (1+\beta)} \|(\mathcal A-r {\rm e}^{\ii\varphi})^{-1}\| \tolong \omega, \quad  r\to  \infty,
\end{equation}
\cf~ Example~\ref{ex.not.semi}. 
Note that $2\beta/(1+\beta)<1$.
Hence, under the additional assumption  $[c,\infty) \subset \sigma(A)$, the constructed operator matrix $\mathcal A$ has non-compact resolvent and does not generate any $C_0$ semigroup, \cf~the Hille-Yosida Theorem \eg~in~\cite[Thm.~II.3.8]{Engel-Nagel}; 
nonetheless, the resolvent norm decay and Theorem~\ref{thm.const.norm} exclude  the occurrence of constant resolvent norm on any open set.
\end{enumerate}

\begin{example}[Empty resolvent set]\label{ex.empty.res}
Let $A$ and $f$ satisfy assumption \ref{ex.ass.empty}. Consider unit vectors $(u_k e_k, v_k e_k)^t$, \ie~$u_k, v_k \in \C$ with $|u_k|^2 + |v_k|^2=1$, and observe that, for any $\lambda \in \C$,
\begin{equation}\label{A.action.1}
(\mathcal A - \lambda) 
\begin{pmatrix}
u_k e_k \\
v_k e_k
\end{pmatrix}
=
\begin{pmatrix}
(-\lambda u_k +  f(\alpha_k) v_k )e_k \\
( \alpha_k u_k -\lambda  v_k )e_k
\end{pmatrix}.
\end{equation}
If we set  $u_k = \lambda v_k / \alpha_k$, the norm of the r.h.s. of \eqref{A.action.1} is
$
|\alpha_k \, f(\alpha_k) - \lambda^2 |/ \sqrt{\alpha_k^2 + |\lambda|^2}$
and the latter tends to zero as $k \to \infty$.
This implies that $\lm\in\sigma_{\rm app}(T)\subset\sigma(T)$.
\end{example}

\begin{example}[Constant resolvent norm on an open set]\label{ex.const.norm}
Let $A$ and $f$ satisfy assumption \ref{ex.ass.const}. 
Then, with $B_k$ as defined in~\eqref{eq.def.B.k},
\begin{equation}\label{eq.norm.is.sup}
\|(\mathcal A -\lambda)^{-1} \| 
=\sup_k \|(B_k - \lambda)^{-1}\| .
\end{equation}
For $\lambda\in\rho(\mathcal A)$ we have 
%
%
\begin{equation}\label{conv.res.norm.of.B.k}
\begin{aligned}
\lim_{k\to\infty}\|(B_k - \lambda)^{-1}\|
&= \lim_{k\to\infty}\left\|
(\alpha_k f(\alpha_k)-\lambda^2)^{-1}
\begin{pmatrix}
\lambda  & f(\alpha_k) \\
\alpha_k  & \lambda 
\end{pmatrix}
\right\|\\
&=\left\|
\begin{pmatrix}
 0 & 0\\ \frac{1}{C} & 0
\end{pmatrix}
\right\|=\frac{1}{C}.
\end{aligned}
\end{equation}
It is shown below that assumption~ \eqref{assumption.example.const.norm} yields the existence of an open subset $U\subset\C$ such that $\|(B_k - \lambda)^{-1}\| < 1/C$ for all $\lambda\in U$ and $k\in\N$. 
Therefore $\|(\mathcal A -\lambda)^{-1} \| = 1/C$ on $U$.

We write $\lambda = r {\rm e}^{\ii \varphi}$. Simple manipulations reveal that
\begin{equation}\label{Mn.est.1}
\|(B_k-\lambda)^{-1}\|^2 \leq 
\frac{r^2 + 2 r \max\{\alpha_k,f(\alpha_k)\} + \max\{\alpha_k,f(\alpha_k)\}^2}
{r^4- 2 \alpha_k f(\alpha_k) r^2 \cos 2 \varphi+\alpha_k^2 f(\alpha_k)^2}. 
\end{equation}
We select $k_1 \in \N$ such that , for all $k\geq k_1$, we have $f(\alpha_k) \leq \alpha_k$ and therefore $\max\{\alpha_k,f(\alpha_k)\}=\alpha_k$.
We can find $r_1>0$ such that the r.h.s. of 
\eqref{Mn.est.1} is strictly less than $1/C^2$ for all $r\geq r_1$ and $k < k_1$. 
Then, for $r\geq r_1$, the inequality $\|(B_k-\lambda)^{-1}\| < 1/C$ is satisfied for all $k\in\N$ if
\begin{equation}\label{ineq.const.norm.2}
\frac{r^2(r^2 - C^2)}{\alpha_k^2} 
- \frac{2r(C^2 + f(\alpha_k) r \cos 2 \varphi )}{\alpha_k} + f(\alpha_k)^2 -C^2 >0, \quad k\geq k_1.
\end{equation}
The assumption~\eqref{assumption.example.const.norm} guarantees that \eqref{ineq.const.norm.2} holds if $\varphi\in [0,2\pi)$ is chosen such that $\cos 2 \varphi <0$ and $r$ is sufficiently large, namely $r\geq r_0$ for some fixed $r_0\geq\max\{r_1,C\}$ that satisfies 
\begin{equation}\label{eq.cond.for.r0}
2 r_0\left(f(\alpha_k)r_0|\cos 2\varphi|- C^2\right)> m, \quad k\geq k_1.
\end{equation}

In Shargorodsky's example the resolvent norm of the considered operator $\mathcal A$ is constantly equal to $C:=1$ on $U:=B_{1/2}(0)$, \cf~\cite[Thm.~3.2]{Shargorodsky-2008-40}. 
By the above reasoning, this is also true for all $\lambda=r {\rm e}^{\ii\varphi}\in\C$ 
such that $\cos 2\varphi<0$ and $r\geq r_0$ for some $r_0\geq 1$ that satisfies~\eqref{eq.cond.for.r0} with $k_1:=1$, $f(x):=1+1/x$ and $m:=0$.
One may check that this is satisfied for $r_0=1/|\cos 2\varphi|$.
\end{example}

\begin{example}[Non-constant resolvent norm on any open set]\label{ex.non.const.norm}

Let $A$ and $f$ satisfy assumption \ref{ex.ass.non.const}. 
We show below that $\|(\mathcal A-\lm)^{-1}\|>1/C$ for every $\lm\in\rho(\mathcal A)$.
Then, for any fixed $\lambda_0\in\rho(\mathcal A)$ and $\delta_0:=\|(\mathcal A-\lm_0)^{-1}\|-1/C>0$, there exists an open bounded neighbourhood $V_0$ of $\lm_0$ such that $\overline{V_0}\subset\rho(\mathcal A)$ and 
\begin{equation}
 \|(\mathcal A-\lm)^{-1}\|>\frac{1}{C}+\frac{\delta_0}{2}, \quad \lm\in V_0.
\end{equation}
For any $\lm\in \overline{V_0}$, by \eqref{conv.res.norm.of.B.k}, we have $\|(B_k-\lm)^{-1}\|\to 1/C$ as $k\to\infty$. Since $\overline{V_0}$ is compact, the convergence is uniform on $\overline{V_0}$ and hence there exists $k_0\in\N$
such that 
\begin{equation}
 \|(B_k-\lm)^{-1}\|\leq \frac{1}{C}+\frac{\delta_0}{2}, \quad \lm\in \overline{V_0}, \quad k> k_0.
\end{equation}
Define $\mathcal A_{k_0}:=\diag(B_1,\dots,B_{k_0})$. Then we have
$\|(\mathcal A-\lm)^{-1}\|=\|(\mathcal A_{k_0}-\lm)^{-1}\|$ for all $\lm\in V_0$.
Since $\mathcal A_{k_0}$ acts in a finite-dimensional space, it is a bounded operator and so its resolvent norm cannot be constant anywhere, hence $\lm\mapsto\|(\mathcal A-\lm)^{-1}\|$ is nowhere constant in $V_0$.
Now because $\lm_0\in\rho(\mathcal A)$ was arbitrary, the same holds for the whole resolvent set.

It is left to show that $\|(\mathcal A-\lm)^{-1}\|>1/C$ for every $\lm\in\rho(\mathcal A)$.
Let $\lm=r {\rm e}^{\ii\varphi}\in\rho(\mathcal A)$. It suffices to show the existence of a $k_{\lm}\in\N$ such that
$\|(B_{k_{\lambda}}-\lambda)^{-1}\|> 1/C$.
With the use of 
 \begin{equation}
 \|(B_k - \lambda)^{-1}\|
 \geq \|(B_k - \lambda)^{-1}(e_k,0)^t\|\\
 =\frac{\sqrt{|\lm|^2+\alpha_k^2} }{|\lambda^2 - \alpha_k f(\alpha_k)|}, \quad k\in\N,
  \end{equation}
we see that, for every $k\in\N$,
\begin{equation}
\|(B_{k}-\lambda)^{-1}\|^2
\geq 
\frac{r^2 +\alpha_k^2}
{r^4- 2 \alpha_k f(\alpha_k) r^2 \cos 2 \varphi+\alpha_k^2 f(\alpha_k)^2}. 
\end{equation}
The r.h.s. of the latter is strictly larger than $1/C^2$ if and only if
\begin{equation}\label{ineq.const.norm.3}
\frac{r^2(r^2 - C^2)}{\alpha_k^2} 
- \frac{2 f(\alpha_k) r^2 \cos 2 \varphi}{\alpha_k} + f(\alpha_k)^2 -C^2 <0.
\end{equation}
Since $\alpha_k\to\infty$ and $f(\alpha_k)\to C$ as $k\to\infty$, there exists $m_{\lambda}>0$ such that
\begin{equation}
\frac{r^2(r^2 - C^2)}{\alpha_k} 
- 2 f(\alpha_k) r^2 \cos 2 \varphi \leq m_{\lambda}, \quad k\in\N.
\end{equation}
If assumption~\eqref{assumption.example.const.norm} is not satisfied, 
then there exists $k_{\lambda}\in\N$ such that \eqref{ineq.const.norm.3} is satisfied for $k:=k_{\lambda}$, and hence the claimed estimate $\|(B_{k_{\lambda}}-\lambda)^{-1}\|> 1/C$ holds.
\end{example}

\begin{example}[Decaying resolvent norm and lack of semigroup generation]\label{ex.not.semi}
Let $A$ and $f$ satisfy assumption \ref{ex.ass.sg}. 
The resolvent formula~\eqref{A.res.form} yields that $\sigma(\mathcal A) = \{\pm \mu^{(1+\beta)/2}\, : \, \mu \in \sigma(A)\} \subset \R$. 
Let $\lambda = r {\rm e}^{\ii \varphi}$ with some fixed $\varphi \notin \{0,\pi\}$.

In the limit $|\lm|\to\infty$, the dominant term in $\|(\mathcal A - \lambda)^{-1}\|$ corresponds to the down-left entry, \cf~\eqref{A.res.form}. The latter can be verified using 
\begin{equation} 
 \begin{aligned}
 \|A^\beta(A^{1+\beta} - \lm^2)^{-1}\| &\leq \|A^{\beta-1}\| \|A(A^{1+\beta} - \lm^2)^{-1}\|, \\
\| \lm (A^{1+\beta} - \lm^2)^{-1}\| &\leq |\lm| / \dist( \lm^2, [0,\infty))=\mathcal O (|\lm|^{-1}).
\end{aligned}
\end{equation}
Since 
\begin{equation}\label{eq.norm.of.down-left}
\|A(A^{1+\beta}-\lambda^2)^{-1}\| = \sup 
\left\{ \frac{\mu}{|\mu^{1+\beta} - \lambda^2| } \, : \: \mu \in \sigma(A)
\right\},
\end{equation}
we analyse the behaviour of the following function (its supremum over $\mu\in\sigma(A)$ is the square of the norm in~\eqref{eq.norm.of.down-left}):
\begin{equation}
g(\mu) := \frac{\mu^2}{r^4  - 2 \mu^{1+\beta} r^2 \cos 2 \varphi + \mu^{2(1+\beta)}}, \quad \mu >0.
\end{equation}
For every $r>0$ the maximum of $g$ is attained at some point $\mu_0(r)>0$. Elementary calculations show that  $\mu_0(r) \sim {\rm const}\cdot r^{2/(1+\beta)}$, hence the estimate~\eqref{res.norm.decay.estimate} on the decay of the resolvent follows. 

If, in addition, $[c,\infty) \subset \sigma(A)$ for some $c>0$, then $\mu_0(r) \in \sigma(A)$ for all sufficiently large $r$, and we obtain \eqref{ex.sg.dr}.
\end{example}

\section{Proof of pseudospectral convergence}
\label{sec:proof.1}

We divide the proof  of Theorem \ref{thm.psp.conv} in several lemmas. First we recall that 
the region of boundedness of a sequence of closed operators $T_k$ acting in $\H_k$ is the set
\begin{equation}
\begin{aligned}
\Delta_{\rm b}(\{T_k\}_k )
:=\Big\{
\lambda \in\C:\,\exists\, k_\lm \in \N, \,  \exists\, M_\lm>0, \,  \forall \, k \geq k_\lm: \, 
\\
\lambda\in\rho(T_k) \mbox{ and } \|(T_k-\lambda)^{-1}\| \leq M_\lm
\Big\},
\end{aligned}
\end{equation}
\cf~\cite[Sec.~VIII.1.1]{Kato-1966}.
To simplify the notation, in the sequel we denote the resolvents by 
$R(\lambda):=(T-\lambda)^{-1}$ and $R_k(\lambda):=(T_k-\lambda)^{-1}$.

The following result is a generalisation of standard results; claim i) is a generalisation of~\cite[Lem.~2.1]{Harrabi-1998} where bounded operators are considered that converge in norm,
claim ii) was shown in \cf~\cite[Thm.~IV.2.25, Sec.~IV.3.3]{Kato-1966} for the case of the usual norm resolvent convergence.
\begin{lemma}\label{lem.rb}
Let $T, T_k$ be densely defined. If $T_k \gnrlong T$, then 
\begin{enumerate}[label=\rm{\roman{*})}]
\item the region of boundedness is $\Delta_{\rm b}\left(\{T_k\}_k \right) = \rho(T)$;
\item for all $\lm\in\rho(T)$,
\begin{equation}\label{eq.gnr.forall.lm}
\|R_k(\lm) P_k - R(\lm)P \|\tolong~0, \quad k\to\infty.
\end{equation}
\end{enumerate}
\end{lemma}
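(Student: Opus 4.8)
The plan is to reduce both assertions to a single bounded-operator identity on the ambient space $\G$, exploiting that $R(\lm_0)P$ and each $R_k(\lm_0)P_k$ are, modulo the projections, genuine resolvents. Write $S:=R(\lm_0)P$ and $S_k:=R_k(\lm_0)P_k$ as bounded operators on $\G$; by hypothesis $\|S_k-S\|\tolong 0$. For $\lm\in\rho(T_k)$ put $\Phi_k:=R_k(\lm)P_k$ and $W_k:=I-(\lm-\lm_0)S_k$ on $\G$, and analogously $\Phi:=R(\lm)P$, $W:=I-(\lm-\lm_0)S$ for $\lm\in\rho(T)$. The first step is to record, from the two forms of the first resolvent identity together with $\Ran(S_k)\subseteq\H_k$ and $P_k|_{\H_k}=I$, the relations
\[
\Phi_k W_k = W_k \Phi_k = S_k, \qquad W_k^{-1} = I + (\lm-\lm_0)\Phi_k,
\]
valid whenever $\lm\in\rho(T_k)$ (the same identities hold for $\Phi,W,S$ when $\lm\in\rho(T)$); the second equality is checked by multiplying out. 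In particular $\Phi_k=S_kW_k^{-1}$.

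The second step translates invertibility of $W$ (resp.\ $W_k$) on $\G$ into spectral membership. Since $S$ maps $\H$ into itself and annihilates $\H^\perp$, the operator $W$ is block diagonal with respect to $\G=\H\oplus\H^\perp$, equal to the identity on $\H^\perp$ and to $I_\H-(\lm-\lm_0)R(\lm_0)$ on $\H$. Combined with the factorization $T-\lm=(T-\lm_0)\bigl(I_\H-(\lm-\lm_0)R(\lm_0)\bigr)$ on $\Dom(T)$, where $T-\lm_0$ is a bijection of $\Dom(T)$ onto $\H$ with bounded inverse, and the spectral mapping theorem for the resolvent, this yields
\[
\lm\in\rho(T)\quad\Longleftrightarrow\quad W \text{ is boundedly invertible on } \G,
\]
and likewise $\lm\in\rho(T_k)\Leftrightarrow W_k$ boundedly invertible.

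With these two steps the claims follow from the stability of invertibility under norm convergence, noting $\|W_k-W\|=|\lm-\lm_0|\,\|S_k-S\|\tolong 0$. For $\rho(T)\subseteq\Delta_{\rm b}$ and for (ii): if $\lm\in\rho(T)$ then $W$ is invertible, so for all large $k$ the $W_k$ are invertible with $\|W_k^{-1}\|$ uniformly bounded and $W_k^{-1}\to W^{-1}$; hence $\lm\in\rho(T_k)$ eventually, $\|R_k(\lm)\|\le\|\Phi_k\|=\|S_kW_k^{-1}\|$ is uniformly bounded so $\lm\in\Delta_{\rm b}$, and $\Phi_k=S_kW_k^{-1}\to SW^{-1}=\Phi$, which is exactly \eqref{eq.gnr.forall.lm}. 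For the reverse inclusion $\Delta_{\rm b}\subseteq\rho(T)$: if $\lm\in\Delta_{\rm b}$, then for large $k$ one has $\lm\in\rho(T_k)$ with $\|R_k(\lm)\|\le M_\lm$, so $\|\Phi_k\|\le M_\lm$, and the identity $W_k^{-1}=I+(\lm-\lm_0)\Phi_k$ gives $\|W_k^{-1}\|\le 1+|\lm-\lm_0|M_\lm$ uniformly; since $W_k\to W$ in norm with uniformly bounded inverses, $W$ is invertible, whence $\lm\in\rho(T)$. Together these give $\Delta_{\rm b}=\rho(T)$.

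The main obstacle is the inclusion $\Delta_{\rm b}\subseteq\rho(T)$: transferring invertibility from the approximants $W_k$ to the limit $W$ needs a \emph{uniform} bound on $\|W_k^{-1}\|$, which norm convergence alone does not provide. The point that unlocks it is the explicit identity $W_k^{-1}=I+(\lm-\lm_0)R_k(\lm)P_k$, which converts the region-of-boundedness hypothesis (a uniform bound on the resolvents $R_k(\lm)$) directly into a uniform bound on $\|W_k^{-1}\|$. The recurring care point is the bookkeeping with the projections $P,P_k$ and the block structure relative to $\H,\H_k$, which is what guarantees that the scalar resolvent identities survive the passage to the ambient space $\G$.
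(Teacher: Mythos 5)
Your proof is correct, and for the hard inclusion $\Delta_{\rm b}(\{T_k\}_k)\subseteq\rho(T)$ it takes a genuinely different route from the paper. The paper proves that inclusion in contrapositive form in two steps: its claim a) shows $\sigma_{\rm app}(T)\subset\C\setminus\Delta_{\rm b}(\{T_k\}_k)$ using only generalised \emph{strong} resolvent convergence, by transplanting approximate eigenvectors via $x_{m;k}:=R_k(\lm_0)P_k(T-\lm_0)x_m$ and a diagonal-sequence argument (this is where density of the domains enters), and its claim b) covers the remaining part of the spectrum through the Hilbert-space splitting $\sigma(T)=\sigma_{\rm app}(T)\cup\sigma_{\rm app}(T^*)^*$, applying a) to the adjoints. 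You replace all of this by the algebraic observation that, for $\lm\in\rho(T_k)$, the operator $W_k=I-(\lm-\lm_0)R_k(\lm_0)P_k$ has the \emph{explicit} inverse $I+(\lm-\lm_0)R_k(\lm)P_k$; the uniform resolvent bound in the definition of $\Delta_{\rm b}$ then yields a uniform bound on $\|W_k^{-1}\|$, and norm convergence $W_k\to W$ with uniformly bounded inverses forces $W$ to be invertible, whence $\lm\in\rho(T)$ by the block structure and the spectral mapping theorem (the case $\lm=\lm_0$ being trivial since then $W=I$ and $\lm_0\in\rho(T)$ by hypothesis). Your treatment of the easy inclusion $\rho(T)\subseteq\Delta_{\rm b}(\{T_k\}_k)$ and of part ii) is essentially the paper's claim c): the paper's auxiliary operator $S_k=I+(\lm_0-\lm)R_k(\lm_0)P_k$ is exactly your $W_k$, and both arguments rest on the spectral mapping theorem plus norm-stability of bounded invertibility. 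The trade-off is this: the paper's longer route isolates what already follows from generalised strong resolvent convergence alone (its claims a) and b) hold under that weaker hypothesis), whereas your argument uses norm convergence throughout; in return, your proof never invokes adjoints, density of the domains, or strong convergence, so it is shorter and, notably, applies verbatim in the Banach-space setting of Remarks \ref{rem.Banach} and \ref{rem.Banach.2} (where $P=P_k=I$), avoiding the Banach-adjoint splitting described there.
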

\begin{proof}
We proceed in three steps:
\begin{enumerate}[label=\rm{\alph{*})}]
\item If there exists $\lambda_0 \in \cap_{k\in\N}\rho(T_k)\cap\rho(T)$ such that $R_k(\lm_0)P_k \s R(\lm_0)P$, then $\sigma_{\rm app}(T)\subset \C\backslash\Delta_{\rm b} (\{T_k\}_k )$.
\item If there exists $\lambda_0 \in \cap_{k\in\N} \rho(T_k)\cap\rho(T)$ such that $R_k(\lm_0) P_k \s R(\lm_0)P$ and $R_k(\lm_0)^* P_k \s R(\lm_0)^* P$, then $\sigma(T)\subset \C\backslash \Delta_{\rm b} (\{T_k\}_k )$.
\item If there exists $\lambda_0 \in \cap_{k\in\N} \rho(T_k)\cap\rho(T)$ such that $\|R_k(\lm_0) P_k - R(\lm_0)P \|\to~0$, then $\rho(T) \subset \Delta_{\rm b} (\{T_k\}_k )$ and~\eqref{eq.gnr.forall.lm} holds for all $\lm\in\rho(T)$.
\end{enumerate}
Claim b) implies $\Delta_{\rm b} (\{T_k\}_k ) \subset \rho(T)$; equality then follows from claim c). Note that the generalised strong resolvent convergence of $T_k$ and $T_k^*$ to the respective limit is given by the generalised norm resolvent convergence.

Claim a):
Let $\mu\in\sigma_{\rm app}(T)$.
If there exists an infinite set $I\subset\N$ such that $\mu\in\sigma(T_k)$ for all $k\in I$, then obviously $\mu \notin \Delta_{\rm b} (\{T_k\}_k )$.
In the other case there exists $k_0\in\N$ such that $\mu\in\rho(T_k)$ for all $k\geq k_0$.
Since $\mu\in\sigma_{\rm app}(T)$, there exists $\{x_m\}_m \subset \Dom(T)$ such that $\|x_m\|=1$, $\|(T-\mu)x_m\|\to 0$.
Define 
\begin{equation}
x_{m;k}:=R_k(\lm_0) P_k (T-\lm_0) x_m, \quad m\in\N, \,k \geq k_0.
\end{equation}
Since,
for all $y \in \H$, 
\begin{equation}
(P_k-P) R(\lm_0) y = (I-P_k)( R_k(\lm_0)P_k - R(\lm_0)P )y \tolong 0, \quad k\to\infty,
\end{equation}
we have $P_k x \to Px=x$ for all $x \in \Dom(T)$.
By the density of $\Dom(T)\subset\H$ and $\|P_k\| = 1$, the same is true for all $x\in\H$.
Then the assumptions imply $x_{m;k}\to x_m$ and $T_kx_{m;k}\to Tx_m$ as $k\to\infty$. 
Hence there exists a strictly increasing sequence $\{k_m\}_m\subset \N$ such that, for every $m\in\N$, the element $y_m:=x_{m;k_m}$ satisfies 
\begin{equation}
\|y_m-x_m\|+\|T_{k_m}y_{m}-Tx_m\|<\frac{1}{m}.
 \end{equation}
Therefore
\begin{equation}
 \begin{aligned}
 \|R_{k_m}(\mu)\|
 \geq 
 \frac{\|y_{m}\|}{\|(T_{k_m}-\mu)y_{m}\|}
 \geq 
 \frac{1-\frac{1}{m}}{\|(T-\mu)x_{m}\| + (1+|\mu|)\frac{1}{m} } \tolong \infty,
 \end{aligned}
\end{equation}
thus $\mu\notin\Delta_b\left(\{T_k\}_k\right)$.

Claim b): 
We split $\sigma(T)=\sigma_{\rm app}(T)\cup\sigma_{\rm app}(T^*)^*,$ where, for a set $\Omega \subset \C$, we denote $\Omega^*:=\{\overline{z}\, : \, z \in \Omega\}$. The claim follows from a) applied to $T,T_k$ or $T^*,T_k^*$; note that $\Delta_{\rm b} (\{T_k\}_k ) =\Delta_{\rm b} (\{T_k^*\}_k )^*$.

Claim c):
Since $\|R_k(\lm_0) P_k - R(\lm_0) P\|\to 0$, we have $\lambda _0\in \Delta_{\rm b} (\{T_k\}_k ) \cap \rho(T)$.
Let $\lambda \in \rho(T) \setminus \{\lm_0\}$.
The~spectral mapping theorem yields 
$(\lambda-\lambda_0)^{-1} \in \rho \left(R(\lm_0)\right)$. Since the latter set may only differ by the element $0$ to $\rho \left(R(\lm_0) P \right)$, we have $(\lambda-\lambda_0)^{-1} \in\rho\left(R(\lm_0)P\right)$.
By Kato's result~ \cite[Theorem~IV.2.25]{Kato-1966}, there exists $k_0 \in \N$ such that 
\begin{equation}
(\lambda-\lambda_0)^{-1}\in \rho \left(R_k(\lm_0) P_k \right) \subset \rho\left( R_k(\lm_0) \right), \quad k \geq k_0.
\end{equation}
Again by the spectral mapping theorem, we obtain $\lambda \in \rho(T_k), \, k\geq k_0$.
In order that $\lambda \in \Delta_{\rm b} (\{T_k\}_k )$, it is left to show that $\|R_k(\lm)\|$, $k\geq k_0$, are uniformly bounded.
The idea is to show~\eqref{eq.gnr.forall.lm}, then, in particular, the resolvents are uniformly bounded.

A straightforward application of the first resolvent identity yields
\begin{equation}\label{eq.res.diff.for.lm}
\left(R_k(\lm)P_k-R(\lm)P\right)S_k=\left(I+(\lm-\lm_0)R(\lm)P\right)\left(R_k(\lm_0)P_k-R(\lm_0)P\right),
\end{equation}
where
\begin{equation}
S_k:=I+(\lm_0-\lm)R_k(\lm_0)P_k.
\end{equation}
Since $R_k(\lm_0)P_k \to R(\lm_0)P$ and $S:=\lim_{k \to \infty} S_k$ has bounded inverse, the operator $S_k$ is boundedly invertible for all sufficiently large $k$,
and $\|S_k^{-1}\|$ is uniformly bounded, \cf~\cite[Sec.~I.4.4, Thm.~IV.1.16]{Kato-1966}. Now~\eqref{eq.gnr.forall.lm} follows from \eqref{eq.res.diff.for.lm} and $R_k(\lm_0)P_k \to R(\lm_0)P$. 
\end{proof}

\begin{lemma}\label{lem.K}
Let $T, T_k$ be densely defined.
Assume that $T_k \gnrlong T$. 
Let $K \subset \sigma_\varepsilon(T)$ be compact. Then there exists $k_0 \in \N$ such that $K \subset \sigma_\varepsilon(T_k)$  for all $k\geq k_0$.
\end{lemma}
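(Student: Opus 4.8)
The plan is to argue by contradiction and to reduce the statement to a lower semicontinuity property of the resolvent norm along the generalised norm resolvent convergence. Suppose the conclusion fails. Then there exist a subsequence $(k_j)_j$ with $k_j\to\infty$ and points $z_j\in K$ such that $z_j\notin\sigma_\varepsilon(T_{k_j})$, that is, $z_j\in\rho(T_{k_j})$ and $\|R_{k_j}(z_j)\|\leq 1/\varepsilon$. Since $K$ is compact, after passing to a further subsequence I may assume $z_j\to z^*\in K\subset\sigma_\varepsilon(T)$, so that $\|R(z^*)\|>1/\varepsilon$ (with value $+\infty$ admitted). The whole proof then hinges on establishing the lower bound $\liminf_{j}\|R_{k_j}(z_j)\|\geq\|R(z^*)\|$, which, combined with $\|R_{k_j}(z_j)\|\leq 1/\varepsilon<\|R(z^*)\|$, yields the contradiction.

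To prove this lower bound I would first set up a test-vector estimate, reusing the construction from the proof of Lemma \ref{lem.rb}. For any normalised $x\in\Dom(T)$ put $x_k:=R_k(\lambda_0)P_k(T-\lambda_0)x$, where $\lambda_0$ is the base point from the generalised norm resolvent convergence; as shown there, $x_k\to x$ and $T_kx_k\to Tx$ as $k\to\infty$. Since $x_{k_j}\in\Dom(T_{k_j})$ and $z_j\to z^*$, I obtain $(T_{k_j}-z_j)x_{k_j}\to(T-z^*)x$, and hence, using $\|R_{k_j}(z_j)\|\geq\|x_{k_j}\|/\|(T_{k_j}-z_j)x_{k_j}\|$ together with $\|x_{k_j}\|\to 1$,
\begin{equation*}
\liminf_{j\to\infty}\|R_{k_j}(z_j)\|\geq\frac{1}{\|(T-z^*)x\|}.
\end{equation*}
Taking the supremum over all normalised $x\in\Dom(T)$ and using that $\inf\{\|(T-z^*)x\|:x\in\Dom(T),\|x\|=1\}$ equals $1/\|R(z^*)\|$ when $z^*\in\rho(T)$ and vanishes when $z^*\in\sigma_{\rm app}(T)$, this gives $\liminf_{j}\|R_{k_j}(z_j)\|\geq\|R(z^*)\|$ for every $z^*\in\rho(T)\cup\sigma_{\rm app}(T)$.

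To cover the remaining case $z^*\in\sigma(T)\setminus\sigma_{\rm app}(T)$, I would use the splitting $\sigma(T)=\sigma_{\rm app}(T)\cup\sigma_{\rm app}(T^*)^*$ already exploited in Lemma \ref{lem.rb}. For such $z^*$ one has $\overline{z^*}\in\sigma_{\rm app}(T^*)$, and since $\|R_{k_j}(z_j)\|=\|(T_{k_j}^*-\overline{z_j})^{-1}\|$ with $T_{k_j}^*\gnrlong T^*$ and $\overline{z_j}\to\overline{z^*}$, the same test-vector estimate applied to the adjoints forces $\liminf_{j}\|R_{k_j}(z_j)\|=+\infty\geq\|R(z^*)\|$. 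In every case the lower bound contradicts $\|R_{k_j}(z_j)\|\leq 1/\varepsilon$, which proves the lemma; note that assumption \ref{ass.const} is never used, consistent with the remark preceding the statement.

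I expect the main obstacle to be the treatment of the case $z^*\in\sigma(T)$. The test-vector bound only controls the approximate point spectrum, so the residual part genuinely requires passing to the adjoints, which in turn relies on the fact that the generalised norm resolvent convergence of $T_k$ to $T$ passes to $T_k^*$ and $T^*$. One must also verify carefully that the moving spectral parameter $z_j\to z^*$ interacts correctly with the construction, namely that $x_k\to x$ and $T_kx_k\to Tx$ indeed imply $(T_{k_j}-z_j)x_{k_j}\to(T-z^*)x$, so that the denominators in the test-vector quotient converge to $\|(T-z^*)x\|$.
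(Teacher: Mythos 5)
Your proof is correct, but it takes a genuinely different route from the paper's. The paper proves a pointwise statement --- around every $\lm\in\sigma_\varepsilon(T)$ there is a ball $B_{r_\lm}(\lm)$ contained in $\sigma_\varepsilon(T_k)$ for all large $k$ --- and then uses compactness of $K$ for a finite subcover; the two cases $\lm\in\sigma(T)$ and $\lm\in\rho(T)\cap\sigma_\varepsilon(T)$ are handled by citing Lemma~\ref{lem.rb}~i) (the region-of-boundedness identity) and ii) (norm convergence of the projected resolvents) as black boxes, with Neumann-series expansions as in \eqref{Rn.ineq.3} and \eqref{Rn.ineq.4} to transfer bounds to nearby spectral parameters. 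You instead argue by contradiction along a convergent subsequence $z_j\to z^*$ and prove the lower-semicontinuity inequality $\liminf_j\|R_{k_j}(z_j)\|\geq\|R(z^*)\|$ directly, by feeding the test vectors $x_k=R_k(\lm_0)P_k(T-\lm_0)x$ from the proof of Lemma~\ref{lem.rb} into the moving resolvents, and by invoking the splitting $\sigma(T)=\sigma_{\rm app}(T)\cup\sigma_{\rm app}(T^*)^*$ together with $T_k^*\gnrlong T^*$ (which indeed follows by taking adjoints in the definition, since the $P_k$ are orthogonal projections) for the residual part of the spectrum. In effect you re-prove, in moving-parameter form, the content of claims a) and b) inside the proof of Lemma~\ref{lem.rb} rather than citing the lemma itself; what this buys is a unified argument that needs neither the region-of-boundedness dichotomy nor any Neumann-series bookkeeping, and it isolates a reusable principle (lower semicontinuity of the resolvent norm under generalised norm resolvent convergence with moving spectral parameter). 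What the paper's route buys is modularity --- both halves of Lemma~\ref{lem.rb} are recycled verbatim, including the adjoint argument hidden in its claim b) --- and a slightly stronger intermediate statement (whole balls around points of $\sigma_\varepsilon(T)$ end up in $\sigma_\varepsilon(T_k)$). The steps you flag as delicate do go through: $(T_{k_j}-z_j)x_{k_j}\to(T-z^*)x$ because $T_{k_j}x_{k_j}\to Tx$, $x_{k_j}\to x$ and $z_j\to z^*$; for $z^*\in\rho(T)$ one has $\inf\{\|(T-z^*)x\|:x\in\Dom(T),\,\|x\|=1\}=1/\|R(z^*)\|$ since $T-z^*$ maps $\Dom(T)$ bijectively onto $\H$; and the denominators $\|(T_{k_j}-z_j)x_{k_j}\|$ are nonzero for large $j$ because $z_j\in\rho(T_{k_j})$ and $\|x_{k_j}\|\to 1$.
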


\begin{proof}
First we show that
for every $\lambda \in \sigma_\varepsilon(T)$ there exist $ r_\lambda >0$ and $k_\lambda  \in \N$ such that $B_{r_\lambda}(\lambda) \subset \sigma_{\varepsilon}(T_k)$ for all $k\geq k_\lambda$. Then the claim follows by the compactness of $K\subset \sigma_\varepsilon(T)$.
We divide the proof into two cases: i) $\lambda \in \sigma(T)$, ii) $\lambda \in \rho(T) \cap \sigma_\varepsilon(T)$.

Case i): We proceed by contradiction. 
Assume that there exists $\lambda \in \sigma(T)$ such that for all $k' \in \N$ and $r>0$ there exist  $k \geq k'$ and $\lm_k \in B_r(\lambda)$  with $\lm_k \notin \sigma_\varepsilon(T_k)$, \ie~$\|R_k(\lm_k)\|\leq 1/\varepsilon$.
If we relate $k'$ and $r$ by $r_{k'}:=1/{k'}$, we obtain an infinite set $I \subset \N$ and elements $\lm_k\in\rho(T_k)$, $k\in I$, such that 
\begin{equation}\label{Rn.ineq.1}
\|R_k(\lm_k)\| \leq \frac{1}{\varepsilon}, \quad k \in I, \quad\text{and}\quad \lm_k\tolong\lm, \quad k\to\infty. 
\end{equation}
Since $\lm \notin \rho(T)$, Lemma~\ref{lem.rb}~i) implies $\lm\notin\Delta_{\rm b}(\{T_k\}_k)$. 
Hence there exist infinite sets $I_1 \subset I$, $I_2 \subset I$ such that either $\lm \in \sigma(T_k)$, $k \in I_1$, or $\|R_k(\lm)\| \to \infty$, $k\in I_2$. 
The first option contradicts the first property in \eqref{Rn.ineq.1} since, for $k\in I_1$,
\begin{equation}\label{Rn.ineq.2}
\|R_k(\lm_k) \| \geq \frac{1}{\dist(\lm_k,\sigma(T_k))} \geq \frac{1}{|\lm-\lm_k|}\tolong \infty, \quad k\to\infty.  
\end{equation}
The second option also contradicts~\eqref{Rn.ineq.1} since, for $k\in I_2$,
\begin{equation}
\|R_k(\lm_k)\| \geq \|R_k(\lm)\| - \|R_k(\lm) - R_k(\lm_k)\|
\end{equation}
and the second term tends to zero as $k \to \infty$. 
In order to justify the latter, we expand the resolvents around the points~$\lm_k$ and estimate
\begin{equation}\label{Rn.ineq.3}
\|R_k(\lm) - R_k(\lm_k)\| 
\leq 
\frac{|\lm -\lm_k| \|R_k(\lm_k)\|^2}{1 - |\lm -\lm_k| \|R_k(\lm_k)\|}
\leq \frac{|\lm -\lm_k| \frac{1}{\varepsilon^2}}{1 - |\lm -\lm_k| \frac{1}{\varepsilon}}
\tolong 0, \quad k\to\infty.
\end{equation}

Case ii): 
As $\lm \in \sigma_\varepsilon(T) \cap \rho(T)$, we can write $\|R(\lm)\|= 1/\varepsilon + \alpha$ with some $\alpha >0$.
Since $\lm\in\rho(T)$,  Lemma~\ref{lem.rb}~i) implies that there exist $\widetilde k_\lambda \in \N$ and $M_{\lambda}>0$ such that $\lambda \in \rho(T_k)$ and $\|R_k(\lm)\|\leq M_{\lambda}$ for all $k\geq\widetilde k_\lm$. 
The generalised norm resolvent convergence $T_k \gnrlong T$  and Lemma~\ref{lem.rb}~ii) yield
the existence of $k_{\lambda}\geq \widetilde k_{\lm}$ such that $\|R_k(\lm)P_k - R(\lm)P\|< \alpha/2$ for all $k\geq k_{\lm}$.
The resolvent expansion around $\lm$ implies that, for every positive $r_{\lm}<1/M_{\lm}$ and $k\geq k_\lm$, we have $B_{r_{\lambda}}(\lambda) \subset \rho(T_k)$, and, for all $\mu \in B_{r_{\lambda}}(\lambda)$,
\begin{equation}\label{Rn.ineq.4}
\|R_k(\mu) -R_k(\lm) \| \leq 
\frac{|\mu -\lm| \|R_k(\lm)\|^2}{1 - |\mu -\lm| \|R_k(\lm)\|}
\leq \frac{r_{\lambda} M_\lm^2}{1 - r_{\lambda} M_\lm}.
\end{equation}
There exists $r_{\lambda}>0$ sufficiently small such that the r.h.s.\ of~\eqref{Rn.ineq.4} is less than $\alpha/2$.
Now we estimate, for all $\mu \in B_{r_{\lambda}}(\lm)$ and all $k\geq k_{\lm}$,
\begin{equation}
\begin{aligned}
\|R_k(\mu)\| 
& \geq 
\|R(\lm)\| - \|R_k(\lm)P_k - R(\lm)P\| - \|R_k(\mu) - R_k(\lm)\|
\\
&> \Big(\frac{1}{\varepsilon} + \alpha\Big) - \frac{\alpha}{2} - \frac{\alpha}{2}=\frac{1}{\varepsilon}.
\end{aligned}
\end{equation}
\end{proof}

In the following lemma we use the notation $\omega_{\delta}(\Omega):= \{ z \in \C \, : \, \dist(z,\Omega) < \delta\}$, $\overline{\omega_{\delta}}(\Omega) := \overline{\omega_{\delta}(\Omega)}$ for the open and closed $\delta$-neighbourhood of a set $\Omega \subset \C$.

\begin{lemma}\label{lem.K.not}
Let assumptions \ref{ass.K}--\ref{ass.1.3} of Theorem~{\rm\ref{thm.psp.conv}} be satisfied.
Define 
\begin{equation}
\Lambda:=\overline{\sigma_\varepsilon(T)} \cap K, \quad \Lambda_k:=\overline{\sigma_\varepsilon(T_k)} \cap K.
 \end{equation}
Then for every $\delta >0$ there exists $k_\delta \in\N$ such that 
\begin{equation}\label{K.incl}
\Lambda_k \subset \overline{\omega_\delta}(\Lambda), \quad \Lambda \subset \overline{\omega_\delta}(\Lambda_k), \quad k\geq k_\delta.
\end{equation}
\end{lemma}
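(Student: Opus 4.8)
The plan is to establish the two inclusions in \eqref{K.incl} separately, since they rest on rather different ideas. Throughout, recall that $\Lambda$ and $\Lambda_k$ are compact, being closed subsets of the compact set $K$, so that \eqref{K.incl} is precisely the assertion that the Hausdorff distance tends to zero; the first inclusion will also show $\Lambda_k\neq\emptyset$ for large~$k$.

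For the inclusion $\Lambda\subset\overline{\omega_\delta}(\Lambda_k)$ I would exploit assumption~\ref{ass.K}, which gives $\Lambda=\overline{\sigma_\varepsilon(T)\cap K}$. Thus every $z\in\Lambda$ lies within distance $\delta/2$ of some point $w\in\sigma_\varepsilon(T)\cap K$ of the \emph{open} set $\sigma_\varepsilon(T)$. The balls $B_{\delta/2}(w)$ obtained this way cover the compact set $\Lambda$, so finitely many $B_{\delta/2}(w_1),\dots,B_{\delta/2}(w_n)$ with $w_i\in\sigma_\varepsilon(T)\cap K$ suffice. Applying Lemma~\ref{lem.K} to the finite, hence compact, set $\{w_1,\dots,w_n\}\subset\sigma_\varepsilon(T)$ yields a single $k_0$ with $w_i\in\sigma_\varepsilon(T_k)$, and therefore $w_i\in\Lambda_k$, for all $k\geq k_0$ and all $i$. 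Any $z\in\Lambda$ then lies within $\delta/2$ of some $w_i\in\Lambda_k$, so $z\in\omega_\delta(\Lambda_k)$. This direction uses neither assumption~\ref{ass.const} nor the full strength of the convergence beyond what enters Lemma~\ref{lem.K}.

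The reverse inclusion $\Lambda_k\subset\overline{\omega_\delta}(\Lambda)$ is the main obstacle, and it is here that assumption~\ref{ass.const} is indispensable. I would argue by contradiction: if it fails, there are $\delta_0>0$, an infinite set $I\subset\N$ and points $\mu_k\in\Lambda_k$, $k\in I$, with $\dist(\mu_k,\Lambda)>\delta_0$. Using compactness of $K$, pass to a convergent subsequence $\mu_k\to\mu_*\in K$ with $\dist(\mu_*,\Lambda)\geq\delta_0>0$; in particular $\mu_*\notin\overline{\sigma_\varepsilon(T)}$, so $\mu_*$ has a neighbourhood $V\subset\rho(T)$ on which $\|R(\cdot)\|\leq 1/\varepsilon$, and $\|R(\mu_*)\|\leq 1/\varepsilon$. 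Since $\mu_*\in\rho(T)=\Delta_{\rm b}(\{T_k\}_k)$, Lemma~\ref{lem.rb} furnishes a uniform bound on $\|R_k(\mu_*)\|$, and a resolvent expansion around $\mu_*$ (as in \eqref{Rn.ineq.3}) then shows that $\mu_k\in\rho(T_k)$ for large $k$ with $\|R_k(\mu_k)\|\to\|R(\mu_*)\|$; here one uses $\|R_k(\mu_k)P_k\|=\|R_k(\mu_k)\|$, since $P_k$ is the orthogonal projection onto $\H_k$. On the other hand $\mu_k\in\overline{\sigma_\varepsilon(T_k)}$ forces $\|R_k(\mu_k)\|\geq 1/\varepsilon$, so in the limit $\|R(\mu_*)\|\geq 1/\varepsilon$, and consequently $\|R(\mu_*)\|=1/\varepsilon$.

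It remains to convert this equality into a contradiction, which is the genuinely analytic step. The key fact is that $\lambda\mapsto\|R(\lambda)\|$ is subharmonic on $\rho(T)$, being the norm of the analytic operator-valued function $R(\lambda)$. Since on $V$ this function attains the value $1/\varepsilon$ at the interior point $\mu_*$ while being bounded above by $1/\varepsilon$ throughout $V$, the maximum principle for subharmonic functions forces $\|R(\cdot)\|\equiv 1/\varepsilon$ on a neighbourhood of $\mu_*$, in direct contradiction with assumption~\ref{ass.const}. I expect the only delicate bookkeeping to be the passage $\|R_k(\mu_k)\|\to\|R(\mu_*)\|$ through the shifting base points $\mu_k$, controlled by the uniform resolvent bound from $\Delta_{\rm b}(\{T_k\}_k)=\rho(T)$, and the invocation of subharmonicity together with its maximum principle.
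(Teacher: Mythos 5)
Your proof is correct, but it reorganises the argument and, for the harder inclusion, uses a genuinely different mechanism than the paper. For $\Lambda\subset\overline{\omega_\delta}(\Lambda_k)$ you and the paper use exactly the same two ingredients --- the density $\Lambda=\overline{\sigma_\varepsilon(T)\cap K}$ from assumption~\ref{ass.K} and Lemma~\ref{lem.K} --- the only difference being that you give a direct finite-covering argument while the paper argues by contradiction along a convergent subsequence; this difference is cosmetic. For $\Lambda_k\subset\overline{\omega_\delta}(\Lambda)$ the routes diverge. The paper first converts assumption~\ref{ass.const} into the level-set identity $\overline{\sigma_\varepsilon(T)}=\{z\in\C:\|R(z)\|\geq 1/\varepsilon\}$ (the known equivalence recalled in the introduction), so that the \emph{strict} bound $\|R(\lambda)\|<1/\varepsilon$ holds on $K\setminus\omega_\delta(\Lambda)$, and then uniformises this bound over that compact set via Lemma~\ref{lem.rb}~ii), resolvent expansions and a covering argument, concluding directly that $\overline{\sigma_\varepsilon(T_k)}$ misses $K\setminus\omega_\delta(\Lambda)$ for all large $k$. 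You instead argue by contradiction at a single accumulation point $\mu_*$: from $\mu_*\notin\overline{\sigma_\varepsilon(T)}$ you only extract the non-strict bound $\|R(\mu_*)\|\leq 1/\varepsilon$ on a neighbourhood, you pass the estimate $\|R_k(\mu_k)\|\geq 1/\varepsilon$ through the limit (correctly controlled by $\Delta_{\rm b}(\{T_k\}_k)=\rho(T)$, an \eqref{Rn.ineq.3}-type expansion, and Lemma~\ref{lem.rb}~ii) together with $\|R_k(\mu_*)P_k\|=\|R_k(\mu_*)\|$), and you then invoke subharmonicity of $\lambda\mapsto\|R(\lambda)\|$ and the maximum principle to contradict assumption~\ref{ass.const}. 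In substance both proofs rest on the same potential-theoretic fact --- a resolvent norm attaining an interior maximum is locally constant --- but the paper keeps it encapsulated in the quoted closure identity, whereas you deploy it explicitly; your version avoids any uniform estimate over $K\setminus\omega_\delta(\Lambda)$, while the paper's version stays entirely within its own lemmas (no explicit appeal to subharmonicity) and yields the stronger uniform conclusion $\|R_k(\cdot)\|<1/\varepsilon$ on $K\setminus\omega_\delta(\Lambda)$. Two trivial slips worth fixing: it is the inclusion $\Lambda\subset\overline{\omega_\delta}(\Lambda_k)$, combined with $\Lambda\neq\emptyset$, that forces $\Lambda_k\neq\emptyset$ for large $k$ (not the other inclusion), and the projection identity you need is at the fixed point $\mu_*$, not at the moving points $\mu_k$.
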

\begin{proof}
With regard to Lemma \ref{lem.K}, we need to consider only compact sets $K\subset\C$ that are not subsets of $\sigma_\varepsilon(T)$.

We start with the proof of the first inclusion in~\eqref{K.incl}. 
Let $\delta >0$. 
For $\lm \in K \setminus \omega_\delta(\Lambda)$ holds $\lm \in K \setminus \Lambda$, hence 
\begin{equation}
\lm \notin \overline{\sigma_\varepsilon(T)} = \left\{ z \in \C \, : \, \|R(z)\| \geq \frac{1}{\varepsilon} \right\},
\end{equation}
where the equality is guaranteed by the assumption \ref{ass.const}. Therefore
\begin{equation}\label{eq.estimate.less.than.eps}
\|R(\lm) \| < \frac{1}{\varepsilon}, \quad \lm\in K\backslash\omega_{\delta}(\Lambda).
\end{equation}
 
We show that there exists $k_\delta\in\N$ such that $\|R_k(\lm)\| < 1/\varepsilon$  for all $k \geq k_\delta$ and all $\lm \in K \setminus \omega_\delta(\Lambda)$.
As a consequence, for all $k \geq k_\delta$, $\overline{\sigma_\varepsilon(T_k)} \cap \left(K \setminus \omega_\delta(\Lambda)\right) = \emptyset$, and so the first inclusion in~\eqref{K.incl} follows from 
\begin{equation}
\begin{aligned}
\Lambda_k
 \subset 
\omega_\delta(\Lambda) 
\cup 
\left(\overline{\sigma_\varepsilon(T_k)} \cap \left(K \setminus \omega_\delta(\Lambda)\right)\right) 
 =
\omega_\delta(\Lambda)
\subset 
\overline{\omega_\delta} (\Lambda).
\end{aligned}
\end{equation}
The existence of such a $k_\delta\in\N$ can be justified in the following way.

Let $ \lm\in K\backslash\omega_{\delta}(\Lambda)$.
It follows from~\eqref{eq.estimate.less.than.eps} together with $T_k\gnrlong T$ and Lemma~\ref{lem.rb}~ii) that
there exist $k_\lm\in\N$ and $\alpha>0$ such that $\|R_k(\lm)\| \leq 1/\varepsilon - \alpha$ for all $k\geq k_\lm$. 
Moreover, by proceeding analogously as in~\eqref{Rn.ineq.4}, we can verify that there exists $r_{\lm}>0$ such that, for all $k\geq k_{\lm}$, we have $B_{r_{\lm}}(\lm) \subset \rho(T_k)$,
and 
$\|R_k(\lm) - R_k(\mu)\| < \alpha$ for all $\mu \in B_{r_{\lm}}(\lm)$. Therefore, 
for all $ \mu \in B_{r_{\lm}}(\lm)$, we have
\begin{equation}
\|R_k(\mu)\| \leq \|R_k(\lm)\|  + \|R_k(\lm) - R_k(\mu)\| <\frac{1}{\varepsilon}, \quad k\geq k_{\lm}.
\end{equation}
The existence of the desired $k_\delta\in\N$ now follows from the compactness of~$K \setminus \omega_\delta(\Lambda)$.

The second inclusion in~\eqref{K.incl}  is proved by contradiction. Assume that there exist $\delta>0$, an infinite subset $I_1\subset\N$ and $\{\lm_k\}_{k\in I_1}\subset \Lambda$ such that 
$\lm_k \notin \overline{\omega_\delta} (\Lambda_k)$, $k \in I_1$. Since $\Lambda$ is compact, there exist $\lambda_0\in\Lambda$ and an infinite subset $I_2\subset I_1$ such that $\lambda_k \to \lambda_0$ for $k \in I_2$. 
Moreover, since $\Lambda=\overline{\sigma_\varepsilon(T)} \cap   K = \overline{\sigma_\varepsilon(T) \cap K}$ by assumption~\ref{ass.K}, there exists $\widetilde \lm_0 \in \sigma_\varepsilon(T) \cap K $ such that $|\lm_0 - \widetilde \lm_0| < \delta /2$. 
By Lemma~\ref{lem.K}, there exists $k_0\in\N$ such that $\widetilde \lm_0 \in \sigma_\varepsilon (T_k) \cap K\subset\Lambda_k$ for all $k\geq k_0$. However, 
\begin{equation}
|\lm_k-\widetilde \lm_0| \leq |\lm_k - \lm_0| + |\lm_0 - \widetilde \lm_0| < \delta
\end{equation} 
for all sufficiently large $k \in I_2$; this is a contradiction to 
$\lm_k \notin \overline{\omega_\delta} (\Lambda_k)$.
\end{proof}

\begin{proof}[Proof of Theorem~{\rm\ref{thm.psp.conv}}]
Take some arbitrary $\delta>0$. Then, using Lemma \ref{lem.K.not}, we obtain for all~$k\geq k_\delta$:
\begin{equation}
\begin{aligned}
&d_{\rm H} \left(
\overline{\sigma_{\varepsilon}(T_k)} \cap K, \overline{\sigma_{\varepsilon}(T)} \cap K
\right) \\
&=d_{\rm H} (\Lambda_k, \Lambda)
=
\max \left\{
\sup_{z \in \Lambda_k } \dist(z,\Lambda), 
\sup_{z \in \Lambda} \dist(z,\Lambda_k)
\right\}
\\
&\leq
\max \left\{
\sup_{z \in \overline{\omega_\delta}(\Lambda)} \dist(z,\Lambda),\sup_{z \in \overline{\omega_\delta}(\Lambda_k)} \dist(z,\Lambda_k)
\right\} \leq \delta.
\qedhere
\end{aligned}
\end{equation}
\end{proof}

 \begin{remark}\label{rem.Banach.2}
The Hilbert space structure is used only in the proof of Lemma~\ref{lem.rb}.
If $T$ and $T_k$ act in the same Banach space $\X$, then claim b) in the proof of Lemma~\ref{lem.rb} can be proved using the splitting
$\sigma(T)=\sigma_{\rm app}(T) \cup \sigma_{\rm app}(T^*)$, where $T^*$ is
the adjoint operator in the Banach space $\X^*$.
Hence Theorem~\ref{thm.psp.conv} remains valid for operators that act in the same Banach space and converge in the norm resolvent sense.
 \end{remark}

We indicate the different and additional steps that are needed to prove Theorem~\ref{thm.psp.conv.n} for an arbitrary $n>0$.
In Lemmas~\ref{lem.rb}--\ref{lem.K} and their proofs, we replace all appearing pseudospectra by $n$-pseudospectra and all resolvents by their $2^n$-th powers. 
In addition, we make the following modifications:
\begin{enumerate}[label=\rm{\roman{*})}]
\item For an arbitrary $n\in\N$, we define a notion that is analogous to the region of boundedness, \cf~\eqref{eq.rb.n}.
In Lemma~\ref{lem.rb.all.equal}~i) below, it is shown that if $T_k\gnrlong T$, then this notion coincides, for any $n\in\N$, with the region of boundedness (and thus with the resolvent set of $T$).
Moreover, for every $\lm\in\rho(T)$, we have norm convergence of the powers of the resolvents, \cf~Lemma~\ref{lem.rb.all.equal}~ii).
\item The resolvent expansions used in~\eqref{Rn.ineq.3} and~\eqref{Rn.ineq.4} are generalised in Lemma~\ref{lem.res.expansion.n} below; 
instead of~\eqref{Rn.ineq.3}, we use Lemma~\ref{lem.res.expansion.n} with $\nu:=\lm$, and instead of~\eqref{Rn.ineq.4}, we use Lemma~\ref{lem.res.expansion.n} with $\nu:=\mu$, $\lm_k:=\lm$.
\item The estimate in~\eqref{Rn.ineq.2} is replaced by 
$\left\|R_k(\lm_k)^{2^n}\right\|\geq |\lm-\lm_k|^{-2^n}$, $k\in I_1$.
\end{enumerate}

The following lemma is used as a tool in the subsequent Lemmas; it is proved by induction over $l$ and with the first resolvent identity.
\begin{lemma}\label{lem.res.powers}
Let $l\geq 2$.
Then, for every $k\in\N$ and all $\lm,\lm_0\in\rho(T_k)$,
\begin{equation}
\begin{aligned}
R_k(\lm)=&\sum_{j=1}^{l-1} (\lm-\lm_0)^{j-1} \,R_k(\lm_0)^j\\ &+ (\lm-\lm_0)^{l-1} \,\left(I-(\lm-\lm_0)R_k(\lm_0)\right)^{l-1}R_k(\lm)^l.
\end{aligned}
\end{equation}
\end{lemma}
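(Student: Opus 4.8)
The plan is to prove the identity by induction on $l$, using the first resolvent identity together with the fact that resolvents of the same operator commute. Throughout, fix $k\in\N$ and $\lm,\lm_0\in\rho(T_k)$, and abbreviate $R:=R_k(\lm)$, $R_0:=R_k(\lm_0)$ and $z:=\lm-\lm_0$. Since $R$ and $R_0$ are both resolvents of $T_k$, they commute. The first resolvent identity reads $R-R_0=z\,R_0R$, which I rewrite in the form
\begin{equation}\label{eq.star.plan}
R_0=(I-zR_0)\,R;
\end{equation}
this single relation, together with commutativity, will drive the whole argument. Note that for $z=0$ (\ie~$\lm=\lm_0$) both sides of the asserted identity collapse to $R_0=R$, so I may assume $z\neq 0$; alternatively, both sides are analytic in $\lm$, so it suffices to verify the identity off the diagonal.

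First I would settle the base case $l=2$, where the claim reads $R=R_0+z\,(I-zR_0)R^2$. Using \eqref{eq.star.plan} one computes $(I-zR_0)R^2=\bigl[(I-zR_0)R\bigr]R=R_0R$, so the right-hand side equals $R_0+z\,R_0R$, which is exactly $R$ by the first resolvent identity.

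For the induction step I assume the identity for some $l\geq 2$ and subtract it from the asserted identity for $l+1$. The two initial sums differ only by the single term $z^{l-1}R_0^l$ (all lower-order terms cancel), so after dividing by $z^{l-1}$ the step reduces to the operator identity
\begin{equation}\label{eq.reduce.plan}
(I-zR_0)^{l-1}R^l=R_0^l+z\,(I-zR_0)^{l}R^{l+1}.
\end{equation}
To prove \eqref{eq.reduce.plan} I would peel one factor $(I-zR_0)$ off the last term and apply \eqref{eq.star.plan} in the form $(I-zR_0)R^{l+1}=R_0R^l$, which turns the right-hand side into $R_0^l+z\,(I-zR_0)^{l-1}R_0R^l$. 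Collecting the two remaining terms and using commutativity then leaves me with the single requirement $(I-zR_0)^lR^l=R_0^l$, which is nothing but the $l$-th power of \eqref{eq.star.plan}, since $\bigl[(I-zR_0)R\bigr]^l=R_0^l$.

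The computations are entirely routine; the only points to watch are the repeated use of commutativity, so that powers of $(I-zR_0)$, $R_0$ and $R$ may be freely reordered, and the bookkeeping of exponents when matching the last term of the identity for $l$ against the new summand and the last term of the identity for $l+1$. Since \eqref{eq.star.plan} is the sole nontrivial input, I do not expect any genuine obstacle beyond this bookkeeping.
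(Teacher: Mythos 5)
Your proposal is correct, and it follows exactly the route the paper indicates for this lemma: induction on $l$ combined with the first resolvent identity (the paper itself only states this strategy without giving details, which your argument fills in soundly via the relation $R_0=(I-zR_0)R$ and commutativity of resolvents). No gaps: the base case, the reduction of the induction step to $(I-zR_0)^l R^l=R_0^l$, and the handling of the degenerate case $\lm=\lm_0$ are all in order.
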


Let $n\in\N$. Define a generalisation of the region of boundedness by
\begin{equation}\label{eq.rb.n}
\begin{aligned}
\Delta_{\rm b}^{(n)}(\{T_k\}_k )
:=\Big\{
\lambda \in\C:\,\exists\, k_\lm \in \N, \, \exists\, M_\lm^{(n)}>0,\,  \forall \, k \geq k_\lm: \, 
\\
\lambda\in\rho(T_k) \mbox{ and }\left\|R_k(\lambda)^{2^n}\right\| \leq M_\lm^{(n)}
\Big\}.
\end{aligned}
\end{equation}

\begin{lemma}\label{lem.rb.all.equal}
Let $n\in\N$ and let $T, T_k$ be densely defined. If $T_k \gnrlong T$, then
\begin{enumerate}[label=\rm{\roman{*})}]
\item $\Delta_{\rm b}^{(n)}\left(\{T_k\}_k \right) =\Delta_{\rm b}\left(\{T_k\}_k\right)= \rho(T)$;
\item for all $\lm\in\rho(T)$,
\begin{equation}\label{eq.gnr.forall.lm.n}
\left\|R_k(\lm)^{2^n} P_k - R(\lm)^{2^n}P \right\|\tolong~0, \quad k\to\infty.
\end{equation}
\end{enumerate}
\end{lemma}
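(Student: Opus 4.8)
The plan is to prove the two claims separately, obtaining i) from the already-established identity $\Delta_{\rm b}(\{T_k\}_k) = \rho(T)$ of Lemma~\ref{lem.rb}~i) together with the algebraic resolvent identity of Lemma~\ref{lem.res.powers}, and obtaining ii) from the norm convergence $R_k(\lm)P_k \to R(\lm)P$ of Lemma~\ref{lem.rb}~ii) together with a purely algebraic observation on powers.

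For i), one inclusion is immediate from submultiplicativity of the operator norm: if $\lm \in \Delta_{\rm b}(\{T_k\}_k)$, then for all large $k$ one has $\|R_k(\lm)^{2^n}\| \leq \|R_k(\lm)\|^{2^n} \leq M_\lm^{2^n}$, so $\lm \in \Delta_{\rm b}^{(n)}(\{T_k\}_k)$. The reverse inclusion carries the actual content, since a uniform bound on $\|R_k(\lm)^{2^n}\|$ does not obviously control $\|R_k(\lm)\|$ for non-normal operators. Here I would apply Lemma~\ref{lem.res.powers} with $l := 2^n \geq 2$ and with $\lm_0$ the fixed point from the definition of $T_k \gnrlong T$, so that $\lm_0 \in \rho(T_k)$ for every $k$; since $\lm_0 \in \rho(T) = \Delta_{\rm b}(\{T_k\}_k)$, the definition of the region of boundedness yields a uniform bound $\|R_k(\lm_0)\| \leq M_{\lm_0}$ for all large $k$. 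Given $\lm \in \Delta_{\rm b}^{(n)}(\{T_k\}_k)$, both $\lm, \lm_0 \in \rho(T_k)$ for all large $k$, and Lemma~\ref{lem.res.powers} expresses $R_k(\lm)$ as a sum of the uniformly bounded terms $(\lm-\lm_0)^{j-1} R_k(\lm_0)^j$, $1 \leq j \leq 2^n-1$, plus the single term $(\lm-\lm_0)^{2^n-1}(I - (\lm-\lm_0) R_k(\lm_0))^{2^n-1} R_k(\lm)^{2^n}$, whose norm is controlled by the uniform bounds on $\|R_k(\lm_0)\|$ and $\|R_k(\lm)^{2^n}\|$. This produces a uniform bound on $\|R_k(\lm)\|$, hence $\lm \in \Delta_{\rm b}(\{T_k\}_k)$. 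Combining the two inclusions gives $\Delta_{\rm b}^{(n)}(\{T_k\}_k) = \Delta_{\rm b}(\{T_k\}_k) = \rho(T)$.

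For ii), the algebraic point is that $P_k$ acts as the identity on $\Ran(R_k(\lm)) \subset \H_k$, whence $(R_k(\lm)P_k)^m = R_k(\lm)^m P_k$ for every $m \in \N$, and similarly $(R(\lm)P)^m = R(\lm)^m P$; both follow by a one-line induction. Setting $A_k := R_k(\lm)P_k$ and $A := R(\lm)P$, Lemma~\ref{lem.rb}~ii) gives $\|A_k - A\| \to 0$ for each $\lm \in \rho(T)$, so $\|A_k\|$ is bounded, and the telescoping identity $A_k^m - A^m = \sum_{j=0}^{m-1} A^j (A_k - A) A_k^{m-1-j}$ yields $\|A_k^{2^n} - A^{2^n}\| \to 0$. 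Rewriting the latter as $\|R_k(\lm)^{2^n} P_k - R(\lm)^{2^n} P\| \to 0$ is precisely \eqref{eq.gnr.forall.lm.n}.

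I expect the only genuinely non-routine step to be the reverse inclusion in i), namely recovering control of $R_k(\lm)$ from control of its high power $R_k(\lm)^{2^n}$; this is where the algebraic resolvent identity of Lemma~\ref{lem.res.powers}, rather than any analytic estimate, is indispensable. The remaining steps amount to bookkeeping with the uniform bounds.
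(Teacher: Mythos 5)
Your proposal is correct and takes essentially the same route as the paper's proof: for i) submultiplicativity gives one inclusion and Lemma~\ref{lem.res.powers} with $l=2^n$ together with a uniform bound on $\|R_k(\lm_0)\|$ gives the other, while for ii) the identities $R_k(\lm)^{2^n}P_k=\left(R_k(\lm)P_k\right)^{2^n}$ and $R(\lm)^{2^n}P=\left(R(\lm)P\right)^{2^n}$ reduce the claim to Lemma~\ref{lem.rb}~ii). The only difference is cosmetic: you spell out, via the telescoping identity, the standard fact that norm convergence of bounded operators implies norm convergence of their powers, which the paper leaves implicit.
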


\begin{proof}
\begin{enumerate}[label=\rm{\roman{*})}]
\item 
The inclusion $\Delta_{\rm b}\left(\{T_k\}_k\right)\subset\Delta_{\rm b}^{(n)}\left(\{T_k\}_k \right)$ is implied by the estimate $\left\|R_k(\lm)^{2^n}\right\|\leq\|R_k(\lm)\|^{2^n}$.
For the opposite direction we use $\lm_0\in \cap_{k\in\N} \rho(T_k)\cap\rho(T)$ such that $\|R_k(\lm_0)\|$, $k\in\N$, are uniformly bounded; such a $\lm_0$ exists by the assumption $T_k\gnrlong T$.
Now Lemma~\ref{lem.res.powers} with $l:=2^n$ implies that every $\lm\in \Delta_{\rm b}^{(n)}\left(\{T_k\}_k\right)$ belongs to $\Delta_{\rm b}\left(\{T_k\}_k\right)$.
\item
The claim is an immediate consequence of Lemma~\ref{lem.rb}~ii) and 
\begin{equation}
R(\lm)^{2^n}P=\left(R(\lm)P\right)^{2^n},\quad 
R_k(\lm)^{2^n}P_k=\left(R_k(\lm)P_k\right)^{2^n}. 
\qedhere
\end{equation}
\end{enumerate}
\end{proof}

\begin{lemma}\label{lem.res.expansion.n}
Let $n\in\N$ and $T_k\gnrlong T$.
Let $\{\lm_k\}_k\subset\C$ be a bounded sequence such that $\|R_k(\lm_k)\|$ or $\left\|R_k(\lm_k)^{2^n}\right\|$ is uniformly bounded in $k$.
Then there exists $C>0$ such that  every $\nu\in\C$ with $|\nu-\lm_k|<1/C$, $k\in\N$, satisfies
\begin{equation}
\left\|R_k(\nu)^{2^n}-R_k(\lm_k)^{2^n}\right\|\leq\frac{C^{2^n}}{(1-|\nu-\lm_k| C)^{2^n}}\sum_{j=1}^{2^n}\begin{pmatrix}2^n\\ j\end{pmatrix}C^j |\nu-\lm_k|^j, \quad k\in\N.
\end{equation}
\end{lemma}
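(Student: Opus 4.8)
The plan is to reduce the whole estimate to a single Neumann expansion of $R_k(\nu)$ about $\lm_k$, the one substantive preliminary being a uniform-in-$k$ bound on $\|R_k(\lm_k)\|$.

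First I would extract a constant $C>0$ with $\|R_k(\lm_k)\|\le C$ for all $k$. If $\|R_k(\lm_k)\|$ is already assumed uniformly bounded, this is immediate. If instead only $\left\|R_k(\lm_k)^{2^n}\right\|$ is assumed uniformly bounded, I would argue exactly as in Lemma~\ref{lem.rb.all.equal}~i): pick the point $\lm_0\in\bigcap_{k\in\N}\rho(T_k)\cap\rho(T)$ furnished by $T_k\gnrlong T$, note that the norm convergence $R_k(\lm_0)P_k\to R(\lm_0)P$ forces $\|R_k(\lm_0)\|=\|R_k(\lm_0)P_k\|$ to be uniformly bounded, and apply Lemma~\ref{lem.res.powers} with $l:=2^n$ and $\lm:=\lm_k$. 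Since $\{\lm_k\}_k$ is bounded, $|\lm_k-\lm_0|$ is uniformly bounded, so every summand on the right-hand side of Lemma~\ref{lem.res.powers} except the last is a uniformly bounded product of powers of $\|R_k(\lm_0)\|$ and $|\lm_k-\lm_0|$, while the last is controlled by the assumed bound on $\left\|R_k(\lm_k)^{2^n}\right\|$; this delivers the desired uniform bound $\|R_k(\lm_k)\|\le C$.

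With $C$ in hand, fix $k$ and $\nu$ with $|\nu-\lm_k|<1/C\le 1/\|R_k(\lm_k)\|$; then $\nu\in\rho(T_k)$ and the first resolvent identity gives $R_k(\nu)=R_k(\lm_k)\left(I-(\nu-\lm_k)R_k(\lm_k)\right)^{-1}$. As all factors are functions of $T_k$ and hence commute, raising to the power $2^n$ yields
\[
R_k(\nu)^{2^n}-R_k(\lm_k)^{2^n}=R_k(\lm_k)^{2^n}\left[\left(I-(\nu-\lm_k)R_k(\lm_k)\right)^{-2^n}-I\right].
\]
Writing $W_k:=(\nu-\lm_k)R_k(\lm_k)\left(I-(\nu-\lm_k)R_k(\lm_k)\right)^{-1}$, so that $\left(I-(\nu-\lm_k)R_k(\lm_k)\right)^{-1}=I+W_k$, the binomial theorem turns the bracket into $\sum_{j=1}^{2^n}\binom{2^n}{j}W_k^j$.

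Finally I would insert the elementary bounds $\left\|R_k(\lm_k)^{2^n}\right\|\le\|R_k(\lm_k)\|^{2^n}\le C^{2^n}$ and, from the geometric series, $\|W_k\|\le |\nu-\lm_k|C/(1-|\nu-\lm_k|C)$. Taking norms in the displayed identity and using $0<1-|\nu-\lm_k|C<1$ together with $j\le 2^n$ to bound each factor $(1-|\nu-\lm_k|C)^{-j}$ by $(1-|\nu-\lm_k|C)^{-2^n}$ produces exactly the claimed inequality. The only real obstacle is the uniform resolvent bound of the first step (and in particular the passage from a bound on $\left\|R_k(\lm_k)^{2^n}\right\|$ to one on $\|R_k(\lm_k)\|$ via Lemma~\ref{lem.res.powers}); once $\|R_k(\lm_k)\|\le C$ is secured, the remainder is a routine binomial-and-geometric-series computation.
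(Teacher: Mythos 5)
Your proposal is correct and follows essentially the same route as the paper's proof: first secure a uniform bound $\|R_k(\lm_k)\|\leq C$ (immediate in one case, and via Lemma~\ref{lem.res.powers} with $l:=2^n$, the boundedness of $\{\lm_k\}_k$, and the uniform bound on $\|R_k(\lm_0)\|$ in the other), then expand $R_k(\nu)^{2^n}-R_k(\lm_k)^{2^n}$ using the first resolvent identity and the binomial theorem, and conclude with Neumann-series bounds. The paper organises the expansion as $-R_k(\lm_k)^{2^n}\bigl(I-(\nu-\lm_k)R_k(\lm_k)\bigr)^{-2^n}\sum_{j=1}^{2^n}\binom{2^n}{j}R_k(\lm_k)^j(\lm_k-\nu)^j$ instead of through your operator $W_k$, but the two forms are algebraically equivalent and yield the identical estimate.
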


\begin{proof}
Since $T_k\gnrlong T$, there exists $\lm_0\in\cap_{k\in\N}\rho(T_k)\cap\rho(T)$ such that $\|R_k(\lm_0)\|$, $k\in\N$, are uniformly bounded.
The first resolvent identity and binomial theorem yield the expansion
\begin{equation}
\begin{aligned}
&R_k(\nu)^{2^n}-R_k(\lm_k)^{2^n}\\
&=
-R_k(\lm_k)^{2^n} ( I - (\nu-\lm_k)R_k(\lm_k))^{-2^n}\,\sum_{j=1}^{2^n}\begin{pmatrix}2^n\\ j\end{pmatrix}R_k(\lm_k)^j (\lm_k-\nu)^j.
\end{aligned}
\end{equation}
If $\|R_k(\lm_k)\|$ is uniformly bounded in $k$, then the claim follows by setting $C:=\sup_k\|R_k(\lm_k)\|$.
If, on the other hand, $\left\|R_k(\lm_k)^{2^n}\right\|$ is uniformly bounded in $k$, then the boundedness of $\{\lm_k\}_k$ and Lemma~\ref{lem.res.powers} yield a uniform bound for $\|R_k(\lm_k)\|$.
\end{proof}

{\footnotesize
\bibliographystyle{acm}
\bibliography{references}
}

\end{document}